\newtheorem{Definition}{Definition}[section]
\newtheorem{theorem}[Definition]{Theorem}
\newtheorem{Corollary}[Definition]{Corollary}
\newtheorem{Theorem}[Definition]{Theorem}
\newtheorem{Lemma}[Definition]{Lemma}
\newtheorem{Proposition}[Definition]{Proposition}
\newtheorem{Remark}[Definition]{Remark}
\definecolor{darkred}{rgb}{1, 0.1, 0.3}
\definecolor{darkblue}{rgb}{0.1, 0.1, 1}
\definecolor{darkgreen}{rgb}{0,0.6,0.5}
\newcommand {\mm}[1] {\ifmmode{#1}\else{\mbox{\(#1\)}}\fi}
\newcommand{\C}{\mathbb{C}} % komplexe Zahlen
\newcommand{\T}{\mathbb{T}} % Körper
\newcommand{\R}{\mathbb{R}} % reelle Zahlen
\newcommand{\Q}{\mathbb{Q}} % rationale Zahlen
\newcommand{\Z}{\mathbb{Z}} % ganze Zahlen
\newcommand{\HH}{\mathcal{H}}
\newcommand{\MM}{\mathcal{M}}
\newcommand{\LL}{\mathcal{L}}
\newcommand{\VV}{\mathcal{V}}
\newcommand{\dd}{\mathrm{d}}
\newcommand{\diff}{\frac{\mathrm{d}}{\mathrm{d}t}\Big|_{t=0}}
\begin{document}

\title{On the Hofer--Zehnder capacity for twisted tangent bundles over closed surfaces}
 
\author{
{Johanna Bimmermann}
}

%\institute{Stockholm University, Stockholm, Sweden\\\email{elena.touli@math.su.se}\andthe Ohio State University Columbus, Ohio, U.S.A.\\\email{yusu@cse.ohio-state.edu}\\}\\

%\authorrunning{Mokhov, Sutcliffe and Voronkov}

% \title{FPT-Algorithms for computing Gromov-Hausdorff and interleaving distances between trees}
% \author{Elena Farahbkhsh Touli} \and \author{y}
%\date{}

%\setcounter{page}{0}
\maketitle

\begin{abstract}
\noindent
We determine the Hofer--Zehnder capacity for twisted tangent bundles over closed surfaces for (i) arbitrary constant magnetic fields on the two-sphere and (ii) strong constant magnetic fields for higher genus surfaces. On $S^2$ we further give an explicit $\text{SO}(3)$-equivariant compactification of the twisted tangent bundle to $S^2\times S^2$ with split symplectic form. The former is the phase space of a charged particle moving on the two-sphere in a constant magnetic field, the latter is the configuration space of two massless coupled angular momenta.
\end{abstract}

\section{Introduction and main results}
The notion of symplectic capacities was developed to investigate the existence of symplectic embeddings. As symplectomorphisms are always volume preserving one could ask whether a symplectic embedding $M\hookrightarrow N$ exists if and only if $\textrm{Vol}(M)\leq\text{Vol}(N)$. The answer is no in dimension larger than two and was given by M. Gromov in 1985 with his non-squeezing theorem \cite{Gr85}. This means that there must be more global symplectic invariants than volume. A class of such invariants is given by symplectic capacities as introduced by H. Hofer and E. Zehnder in \cite{HZ94}. There, they constructed a special capacity, now known as the Hofer--Zehnder capacity, relating embedding problems with the dynamics on symplectic manifolds. Very importantly, its finiteness implies the existence of periodic orbits on almost all compact regular energy levels (\cite[Ch. 4]{HZ94}).
\begin{Definition}
Let $(M,\omega)$ be a symplectic manifold. We call a smooth Hamiltonian function $H:M\to\R$ admissible if there exists a compact subset $K\subset M\setminus \partial M$ and a non-empty open subset $U\subset K$ such that
\begin{itemize}
    \item[a)] 
    $
    H\vert_{M\setminus K}=\max H\ \ \text{and}\ \ H\vert_{U}=0,
    $
    \item[b)] $0\leq H(x)\leq \max H$ for all $x\in M$.
\end{itemize}
Denote by $\mathcal{H}(M)$ the set of admissible functions and by $\mathcal{P}_{\leq 1}(H)$ the set of non-constant periodic solutions to the Hamiltonian equations with period at most one. The Hofer--Zehnder capacity of a symplectic manifold $(M,\omega)$ is then defined as
$$
c_{\mathrm{HZ}}(M,\omega):=\sup\lbrace \max H\ \vert\ H\in\HH(M), \mathcal{P}_{\leq 1}(H)=\emptyset \rbrace.
$$
Further one can look at this capacity with respect to a fixed free homotopy class of loops $\nu$. We denote
$$
\mathcal{P}_T(H;\nu):=\lbrace \gamma\in C^\infty(\R/T\Z, M)\ \vert \ \dot\gamma(t)=X_H(\gamma(t))\neq 0;\ [\gamma]=\nu\rbrace
$$
the set of non-constant $T$-periodic solutions to the Hamiltonian equations in the class $\nu$ and by $\mathcal{P}_{\leq T}(M,\nu)$ the set of non-constant periodic solutions in class $\nu$ with period less or equal to $T$. The Hofer--Zehnder capacity with respect to this free homotopy class is defined to be
$$
c_{\mathrm{HZ}}^\nu(M,\omega)=\sup\lbrace \max H\ \vert\ H\in \HH(M), \mathcal{P}_{\leq 1}(H;\nu)=\emptyset\rbrace.
$$
\end{Definition}
\noindent
Loosely speaking the Hofer--Zehnder capacity tells us, how much a Hamiltonian function can oscillate before fast periodic solutions (namely with period at most one) appear.\\
As most capacities, the Hofer--Zehnder capacity is in general hard to compute and not known for many symplectic manifolds. In this paper we determine it for certain domains in twisted tangent bundles of closed surfaces.\\
The tangent bundle $T\Sigma$ is called twisted if we add to the canonical symplectic form $\dd\alpha$ a magnetic term, i.e. the pullback of a 2-form $\varpi\in\Omega^2(\Sigma)$ to the tangent bundle. On a surface choosing a metric determines a unique function $f:\Sigma\to\R$ such that $\varpi=f\cdot\sigma$, where $\sigma$ denotes the area form induced by the metric. The function $f$ can be interpreted as the strength of the magnetic field and we will restrict to constant fields, i.e. $f(x)=s\in \R$ for all $x\in \Sigma$. The main theorem of this paper gives the value of the Hofer-Zehnder capacity of disc-subbundles of the (constantly) twisted tangent bundle $(T\Sigma,\omega_s:=\dd\alpha-s\pi^*\sigma)$.
\begin{theorem}\label{thm1}
Let $(\Sigma,g_\kappa)$ be a closed connected orientable Riemannian surface with constant curvature $\kappa$. Denote by $\sigma_\kappa$ the corresponding area form, by $\alpha$ the canonical one-form on $T\Sigma$ and define the disc bundle $D_\lambda\Sigma:=\lbrace (x,v)\in T\Sigma\ \vert\ \vert v\vert < \lambda\rbrace$ of radius $\lambda$ with respect to $g_\kappa$. Then, whenever $s^2+\kappa\lambda^2>0$ for some $s\in\R$, we have
$$
c^0_{\mathrm{HZ}}(D_\lambda\Sigma, \dd\alpha-s\pi^*\sigma_\kappa)=\left\{\begin{array}{ll} \frac{2\pi}{\kappa}\left( \sqrt{s^2+\kappa\lambda^2}-\vert s\vert\right), & \text{for}\ \kappa\neq 0, \\
         \frac{\pi\lambda^2}{\vert s\vert}, & \text{for}\ \kappa=0.\end{array}\right.
$$
\end{theorem}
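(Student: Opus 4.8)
The plan is to reduce everything to the dynamics of the single Hamiltonian $\rho:=\tfrac12|v|^2$, whose Hamiltonian flow with respect to $\omega_s$ is the magnetic geodesic flow. First I would record the geometry of this flow on a constant-curvature surface: a solution of $\nabla_{\dot\gamma}\dot\gamma = sJ\dot\gamma$ at speed $r=|v|$ (with $J$ the rotation by $\pi/2$) is a curve of constant geodesic curvature $k_g=s/r$, and on $(\Sigma,g_\kappa)$ all such curves are congruent under the identity component of the isometry group. Whenever $s^2+\kappa r^2>0$ they are embedded circles bounding a disc, hence closed and \emph{contractible}, with common length $L(r)=2\pi r/\sqrt{s^2+\kappa r^2}$ and therefore common period $\tau(r)=L(r)/r=2\pi/\sqrt{s^2+\kappa r^2}$. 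Since $r<\lambda$ together with $s^2+\kappa\lambda^2>0$ forces $s^2+\kappa r^2>0$ on the whole open disc bundle, every non-constant orbit of the $\rho$-flow in $D_\lambda\Sigma$ is closed and contractible.

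The computation then becomes one-dimensional. Setting $G(\rho):=\int_0^{\rho}\tau(\sqrt{2t})\,dt$, the vector field $X_G=G'(\rho)X_\rho=\tau\cdot X_\rho$ has every orbit of period exactly $1$, so $G$ generates a Hamiltonian circle action on $D_\lambda\Sigma$ with contractible orbits and moment-map image $[0,A]$, where $A:=G(\lambda^2/2)$. A substitution $u=s^2+2\kappa t$ gives $A=\frac{2\pi}{\kappa}(\sqrt{s^2+\kappa\lambda^2}-|s|)$ for $\kappa\neq0$ and $A=\pi\lambda^2/|s|$ for $\kappa=0$, i.e. exactly the two claimed values.

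For the lower bound I would take radial admissible functions $H=h(\rho)\in\HH(D_\lambda\Sigma)$ that vanish near the zero section, equal their maximum near the boundary, and satisfy $0\le h'(\rho)<\tau(\sqrt{2\rho})$. Then $X_H=h'(\rho)X_\rho$ has period $\tau/h'>1$ on every non-critical level, and since all of its non-constant orbits are reparametrised magnetic geodesics — hence contractible — we get $\mathcal{P}_{\le1}(H;0)=\emptyset$. Letting $h'\nearrow\tau$ makes $\max H=\int h'\,d\rho$ approach $A$, whence $c^0_{\mathrm{HZ}}\ge A$.

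The upper bound is the main obstacle, because one must rule out that some \emph{non-radial} admissible $H$ with $\max H>A$ still avoids fast contractible orbits, and one must pin the exact constant rather than a soft existence bound. For $\kappa>0$ I would exploit the $\mathrm{SO}(3)$-equivariant compactification of $(D_\lambda S^2,\omega_s)$ to $(S^2\times S^2,\omega_a\oplus\omega_b)$: by monotonicity of $c_{\mathrm{HZ}}$ under the symplectic embedding $D_\lambda S^2\hookrightarrow S^2\times S^2$ together with the classical value $c_{\mathrm{HZ}}(S^2\times S^2,\omega_a\oplus\omega_b)=\min(a,b)$, it remains to match $\min(a,b)$ with $A$. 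In the general constant-curvature case (in particular higher genus with strong field) I would instead use the period-$1$ circle action generated by $G$: an almost-existence argument shows that for any admissible $H$ almost every level set carries a closed characteristic, and comparing the $H$-flow with the normalised magnetic flow through the moment map $G$ should force a contractible orbit of period $\le1$ as soon as $\max H>A$. Making this comparison rigorous — and handling the flat torus separately, where the isometry group is no longer transitive on sphere bundles and one instead reduces to the linear model of circular orbits in $\R^2$ — is where the real work lies.
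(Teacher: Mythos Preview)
Your lower bound is precisely the paper's: reparametrise the kinetic Hamiltonian so that the magnetic geodesics (curves of geodesic curvature $s/r$, all contractible under the hypothesis $s^2+\kappa\lambda^2>0$) have period strictly larger than $1$, and let the total variation approach $A$.

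For $\kappa>0$ your proposed upper bound is a legitimate alternative to the paper's general argument: the equivariant embedding $D_\lambda S^2\hookrightarrow (S^2\times S^2,R_1\sigma\oplus R_2\sigma)$ together with the known value $c_{\mathrm{HZ}}^0(S^2\times S^2)=4\pi\min(R_1,R_2)$ does give the bound, since $s^2+\lambda^2=(R_1+R_2)^2$ implies $2\pi(\sqrt{s^2+\lambda^2}-|s|)=4\pi\min(R_1,R_2)$. Note, however, that this presupposes the construction of that embedding, which in the paper is a separate result (Theorem~\ref{thm4}) and is not used in the proof of Theorem~\ref{thm1}.

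For $\kappa\le 0$ there is a genuine gap. The existence of the Hamiltonian circle action generated by $G$ with moment image $[0,A]$ does \emph{not} by itself yield $c_{\mathrm{HZ}}^0\le A$: the almost-existence theorem runs in the opposite direction (finiteness of the capacity is its \emph{hypothesis}), and ``comparing the $H$-flow with the normalised magnetic flow through the moment map $G$'' is not an argument --- an arbitrary admissible $H$ is not radial and need not commute with $G$, so the circle action gives no control over its periodic orbits. You correctly identify this as ``where the real work lies'', but you supply no mechanism, and no soft dynamical argument of this type is known to suffice.

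The paper's mechanism is different and uniform in $\kappa$. One builds an explicit symplectomorphism $F_s=m_{a_s(r)}\circ\Phi_{b_s(r)}$ (fibrewise dilation composed with the flow of $-H$) from $(D_0^\lambda\Sigma,\omega_s)$ onto an annulus bundle $(D_a^b\Sigma,\,\dd E\wedge\tau-(\kappa E+s)\pi^*\sigma)$ in which the \emph{fibers} are symplectic; this compactifies fibrewise to the trivial bundle $(S^2\times\Sigma,\tilde\omega)$, and $F_s$ extends over the zero section onto the complement of a single section $S$. The upper bound then comes from Lu's theorem: a non-vanishing genus-zero Gromov--Witten invariant in the fiber class $[A]$ bounds the pseudo-capacity $C_{\mathrm{HZ}}^{(2o)}(S^2\times\Sigma,\tilde\omega;[pt],[S])$, hence $c_{\mathrm{HZ}}^0(D_\lambda\Sigma,\omega_s)=c_{\mathrm{HZ}}^0(S^2\times\Sigma\setminus S,\tilde\omega)$, by the symplectic area $\tilde\omega([A])$ of a fiber, which computes to exactly $A$. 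The Gromov--Witten input is the ingredient your sketch is missing.
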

\noindent
The theorem covers three types of surfaces: Spheres ($\kappa>0$), the torus ($\kappa=0$) and higher genus surfaces $(\kappa< 0)$.
The assumption $s^2+\kappa\lambda^2>0$ does not put any additional constraint on the sphere, for the torus it tells us that the magnetic field does not vanish, i.e. $s\neq 0$, and for higher genus surfaces it tells us to look at strong magnetic fields, i.e. $\vert s\vert >\sqrt{-\kappa}\lambda$.
\begin{Remark}
While $c_{\mathrm{HZ}}\leq c^0_{\mathrm{HZ}}$ always holds, we will in these three cases find that actually $$c^0_{\mathrm{HZ}}(D_\lambda\Sigma, \dd\alpha-s\pi^*\sigma_\kappa)=c_{\mathrm{HZ}}(D_\lambda\Sigma, \dd\alpha-s\pi^*\sigma_\kappa).$$
\end{Remark}
\begin{Remark}
The result extends continuously to the limit $s^2+\kappa\lambda^2=0$. This happens either when looking at the torus with vanishing magnetic field ($\kappa=0, s=0$) or on higher genus surfaces whenever $s=\sqrt{\vert \kappa\vert}\lambda$. In these two cases we find
$$
c^0_{\mathrm{HZ}}(D_\lambda\Sigma, \dd\alpha-s\pi^*\sigma_\kappa)=\left\{\begin{array}{ll} \frac{2\pi\vert s\vert}{\vert \kappa\vert}, & \text{for}\ \kappa\neq 0, \\
         \infty, & \text{for}\ \kappa=0.\end{array}\right.
$$
In the limit case for higher genus surfaces it follows as the Hofer--Zehnder capacity must per definition be lower semi-continuous in $\lambda$. On the torus without magnetic field we observe that the kinetic Hamiltonian has no contractible orbits. This immediately yields $c_\mathrm{HZ}^0(D_\lambda\T^2,\dd\alpha)=\infty$.
\end{Remark}
\noindent
For these three cases the Hofer--Zehnder capacity can be computed in the same manner. The rough idea for finding a lower bound of the Hofer-Zehnder capacity is that the periodic solutions to the kinetic Hamiltonian $E(x,v)=\frac{1}{2}g_x(v,v)$ are contractible or even more specificly geodesic circles. It is fairly easy to modify the kinetic Hamiltonian into an admissible Hamiltonian (of the form $f(E)$) that does not admit fast periodic solutions and yields a lower bound for the Hofer--Zehnder capacity. Finding an upper bound is somewhat more involved and consists of three steps.\\
\textit{1. Symplectization of fibers:} Construction of a symplectomorphism
$$
F: (D_0^\lambda \Sigma, \omega_s)\to (D^b_a \Sigma, \Tilde{\omega}),
$$
where $\Tilde{\omega}$ is such that the fibers are symplectic, $D^b_a\Sigma=\lbrace (x,v)\in T\Sigma\ \vert\ a<\vert v\vert < b\rbrace$ and $a,b$ are some constants depending on $\lambda$ and $s$. \\
\textit{2. Compactification:} We will compactify this bundle fiberwise into a (topologically trivial) sphere bundle.\footnote{This compactification can be seen as concrete case of the symplectic cut construction by Lerman \cite{ler}.  } This yields a symplectic embedding $(D^\lambda_0 \Sigma,\omega_s)\hookrightarrow (S^2\times \Sigma,\tilde\omega)$ which can be extended to the zero section.\\
\textit{3. Application of Lu's theorem:} We can then use a theorem by G. Lu \cite[Thm. 1.10]{Lu06} to show that the symplectic area of a fiber yields an upper bound to the Hofer--Zehnder capacity of the twisted disc bundle.\\
Finally we will see that upper and lower bound agree and therefore determine the Hofer--Zehnder capacity.\\
%\todo[inline]{Mention of symplectic cuts from Lerman, Albers, Geiges, Zehmisch.}
In the case of $D_\lambda S^2$ it is possible to do symplectization and compactification more explicitly and respecting the symmetry group $\text{SO}(3)$.  
\begin{theorem}\label{thm4}
There is an $\mathrm{SO}(3)$-equivariant symplectomorphism
$$
F: (D_\lambda S^2,\omega_s)\to (S^2\times S^2\setminus \Delta, R_1\sigma\oplus R_2\sigma),
$$
where $\Delta\subset\ S^2\times S^2$ is the diagonal and $R_1,\ R_2 >0$ satisfy
$$
s=R_2-R_1,\ \ \lambda=2\sqrt{R_1R_2}.
$$
\end{theorem}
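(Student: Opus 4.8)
The plan is to route the identification through the $\mathrm{SO}(3)$ moment maps of the two sides and then let symplecticity follow automatically from moment-map intertwining. First I would compute the moment map of the diagonal $\mathrm{SO}(3)$-action on the source $(D_\lambda S^2,\omega_s)$. Writing $S^2\subset\R^3$ as the unit sphere and $TS^2=\{(x,v):|x|=1,\ v\perp x\}$, the cotangent lift contributes $x\times v$, while the magnetic term $-s\pi^*\sigma$ contributes $-sx$ (the height function $\langle x,\eta\rangle$ is the Hamiltonian generating rotation about $\eta$ for the area form $\sigma$). Hence $\mu_s(x,v)=x\times v-sx$. On the target each factor $(S^2,R_i\sigma)$ is a coadjoint orbit with moment map $u\mapsto R_iu$, so the diagonal action on $(S^2\times S^2,R_1\sigma\oplus R_2\sigma)$ has moment map $(u_1,u_2)\mapsto R_1u_1+R_2u_2=\xi_1+\xi_2$.

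Next I would build $F$ so that it intertwines these moment maps. Since $v\perp x$ we have $\langle\mu_s,x\rangle=-s$ and $|\mu_s|^2=|v|^2+s^2$, so as $|v|$ runs over $[0,\lambda)$ the length $|\mu_s|$ runs over $[\,|s|,\sqrt{\lambda^2+s^2}\,)=[\,|R_2-R_1|,R_1+R_2\,)$, using $\lambda^2+s^2=(R_1+R_2)^2$. This matches the target, where $|\xi_1+\xi_2|$ equals $R_1+R_2$ exactly on the diagonal and $|R_2-R_1|$ exactly on the anti-diagonal. I would therefore define $F(x,v)=(\xi_1,\xi_2)$ by splitting $J:=\mu_s(x,v)=x\times v-sx$ into two vectors of prescribed lengths $R_1,R_2$ summing to $J$ and lying in the plane $\mathrm{span}(x,\,x\times v)$; concretely $\xi_i=A_i(|v|^2)\,x+C_i(|v|^2)\,(x\times v)$, where the coefficients are the (smooth, suitably branched) solutions of the algebraic system $A_1+A_2=-s$, $C_1+C_2=1$, $A_i^2+C_i^2|v|^2=R_i^2$. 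One checks that $F(x,0)=(R_1x,-R_2x)$, so the zero section maps to the anti-diagonal.

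I would then verify that $F$ is an $\mathrm{SO}(3)$-equivariant diffeomorphism onto $S^2\times S^2\setminus\Delta$. Equivariance is immediate, since $F$ is assembled from the equivariant fields $x$ and $x\times v$ with coefficients depending only on the invariant $|v|^2$. For bijectivity I would invert explicitly: $|v|^2=|J|^2-s^2$ recovers the fibre radius, the identity $\xi_1\times\xi_2=(A_2C_1-A_1C_2)\,v$ recovers the direction of $v$, and Cramer's rule applied to $\xi_1=A_1x+C_1(x\times v)$, $\xi_2=A_2x+C_2(x\times v)$ recovers $x$, once the nonvanishing of $A_2C_1-A_1C_2$ on $(0,\lambda)$ is confirmed. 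Crucially $|J|=R_1+R_2$ (the diagonal) corresponds precisely to $|v|=\lambda$, which is excluded, so the image is exactly the complement of $\Delta$. I expect this inversion — the correct branch of the coefficients, smoothness across the zero section, and nondegeneracy of the determinant — to be the main technical obstacle.

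Finally, symplecticity comes essentially for free. Set $\beta:=F^*(R_1\sigma\oplus R_2\sigma)-\omega_s$. For every $\eta\in\mathfrak{so}(3)$, equivariance identifies the infinitesimal generator $X_\eta$ with its counterpart on $S^2\times S^2$, so $\iota_{X_\eta}F^*(R_1\sigma\oplus R_2\sigma)=F^*\,d\langle\xi_1+\xi_2,\eta\rangle=d\langle\mu_s,\eta\rangle=\iota_{X_\eta}\omega_s$, whence $\iota_{X_\eta}\beta=0$. Wherever $v\neq0$ the stabilizer of $(x,v)$ is trivial, so the orbit directions span a $3$-dimensional subspace of the $4$-dimensional tangent space; a $2$-form annihilated by a codimension-one subspace must vanish, so $\beta=0$ on the dense set $\{v\neq0\}$ and hence everywhere by continuity. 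Therefore $F^*(R_1\sigma\oplus R_2\sigma)=\omega_s$, which completes the proof.
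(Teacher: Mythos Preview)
Your proposal is correct and follows essentially the same strategy as the paper: compute the two $\mathrm{SO}(3)$ moment maps, build $F$ so that it intertwines them, observe equivariance, and then deduce $F^*(R_1\sigma\oplus R_2\sigma)=\omega_s$ from the identity $\iota_{X_\eta}\beta=0$ together with the fact that the orbit directions span a $3$-dimensional subspace off the zero section. The only cosmetic difference is the parametrization of the image points: the paper writes $(N,S)$ as endpoints of geodesics emanating from $x$ and $-x$, encoded by angular functions $c_1(r),c_2(r)$ satisfying $\sin(c_1r)+R\sin(c_2r)=r$, $\cos(c_1r)-R\cos(c_2r)=1-R$, whereas you write $\xi_i=A_i\,x+C_i\,(x\times v)$ with the linear/quadratic system $A_1+A_2=-s$, $C_1+C_2=1$, $A_i^2+C_i^2|v|^2=R_i^2$. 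These describe the same map (the great circle through $x$ in the plane $\mathrm{span}(x,x\times v)$), and both parametrizations face the same technical hurdle you correctly flag: smoothness of the chosen branch at $v=0$, which the paper resolves by an implicit-function-theorem argument after rewriting the system in terms of $\sin(t)/t$ and $(1-\cos t)/t^2$.
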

\begin{Remark}
As suggested by Richard Hind $(S^2\times S^2,\sigma\oplus\sigma)$ can be identified with the quadric $x^2+y^2+z^2=w^2$ in $\C \mathbb P^3$ with Fubini--Study symplectic form and the disc bundle $(DS^2,\dd\alpha)$ is included as the affine quadric $x^2+y^2+z^2=1$.
Thus, the existence of such a symplectomorphism in the case of vanishing magnetic field (s=0) has been known before (Entov--Polterovich and Zapolsky, unpublished). 
\end{Remark}
\noindent
The target space of this symplectomorphism $(S^2\times S^2, R_1\sigma\oplus R_2\sigma)$ admits a physical interpretation. It is the configuration space of two coupled massless oscillators $N,S\in S^2\subset\R^3$. The Hamiltonian of the potential energy of this system is given by $H=R_1R_2(1-N\cdot S)$ where $\cdot$ denotes the standard scalar product in $\R^3$ and $R_1R_2$ determines the strength of the coupling (spring constant). We will find that $F$ relates $H$ to the kinetic Hamiltonian $E$ via
$$
H\circ F=\frac{1}{2}\lambda^2-E.
$$
\begin{Remark}
Finiteness of the Hofer--Zehnder capacity was already known due to Benedetti and Zehmisch in the spherical case \cite{BZ15}. Further, Macarini \cite{Mac03} shows that for strong fields on higher genus surfaces the Hofer--Zehnder capacity of $(D_0^\lambda\Sigma,\omega_s)$ is finite. We extend this result to $D_\lambda\Sigma$ and compute the actual value.  \\
The Hofer--Zehnder capacity for the twisted disc bundle of the torus was already known due to V. Ginzburg \cite[Ch. 5]{Gb96}, who gives a symplectomorphism between $(T^*\T,\omega_s)$ and $(\R^2\times \T, \Omega\oplus\sigma_g)$ where $\Omega$ is the standard area form of $\R^2$, and A. Floer, H. Hofer and C. Viterbo  \cite{FHV89} who computed $c_{\mathrm{HZ}}(D_\lambda\times\T,\Omega\oplus\sigma_g)$ (see also \cite[Ch. 3.5, Thm. 6]{HZ94}). We included it for completeness and since it can be computed using the same technique as for the other cases.
\end{Remark}
\begin{Remark}\label{rem}
It is unknown (at least to the author) what the Hofer--Zehnder capacity for the disc bundle of the torus with vanishing magnetic field and for higher genus surfaces with weak magnetic fields is. In the case of the torus finiteness follows from \cite[Prop.4, Ch.4]{HZ94}, but for higher genus surfaces even finiteness remains unclear.
\end{Remark}
\noindent
In the cases of Remark \ref{rem}, one can look at a relative version of the Hofer--Zehnder capacity (defined by V. Ginzburg and B. Gürel in \cite{VG2003}).

\begin{Definition}
For a subset $Z\subset M$ that doesn't touch the boundary, i.e. $\mathrm{cl}(Z)\cap\partial M=\emptyset $, we denote by $\HH(M,Z)$ the set of smooth functions satisfying 
\begin{itemize}
    \item[a)] 
    $
    H\vert_{M\setminus K}=\max H\ \ \text{and}\ \ H\vert_{U}=0,
    $
    \item[b)] $0\leq H(x)\leq \max H$ for all $x\in M$,
\end{itemize}
for an open neighborhood $U\supset Z$ and a compact set $K\supset U$. The relative Hofer--Zehnder capacity is then defined as 
$$
c_{\mathrm{HZ}}(M, Z, \omega):=\sup\lbrace \max H\ \vert\ H\in\HH(M,Z), \mathcal{P}_{\leq 1}(H)=\emptyset\rbrace.
$$
\end{Definition}
\begin{Remark}
Observe that clearly $c_{\mathrm{HZ}}(M,Z,\omega)\leq c_{\mathrm{HZ}}(M,\omega)$ for any $Z\subset M$.
\end{Remark}
\begin{Remark}
As for the Hofer--Zehnder capacity there is an almost existence result in case of finite relative Hofer--Zehnder capacity \cite[Thm. 2.14]{VG2003}. It says that if $c_{\mathrm{HZ}}(M,Z,\omega)<\infty$ and $H: M\to \R$ is a proper smooth function with $H\vert_Z=\min H$, then almost all compact regular energy levels carry periodic orbits.
\end{Remark}
\noindent
We further denote by
$$
l_\nu=\inf\lbrace \text{length}(\gamma)\ \vert\ \gamma\ \text{is a closed geodesic of}\ (\Sigma,g)\ \text{in the class}\ \nu \rbrace.
$$
J. Weber determined in \cite[Thm. 4.3]{Wbr06} the value of the BPS-capacity of unit disc-bundles with canonical symplectic structure relative to the zero section to be
\begin{equation}\label{rell}
    c_{BPS}^\nu(D_1\Sigma,\Sigma, \omega_0)=l_\nu.
\end{equation}
\noindent
Ginzburg and Gürel showed in \cite{VG2003} section 2.2 that the relative Hofer--Zehnder capacity coincides with the BPS-capacity for standard bundles as defined in \cite{Wbr06}.
The theorem by Weber therefore covers the case of the torus with no magnetic field.
We further used this theorem to compute $c_{\mathrm{HZ}}^\nu(D_\lambda \Sigma, D_{ \frac{\vert s\vert}{\sqrt{-\kappa}}}\Sigma, \omega_s)$ for $\Sigma$ a higher genus surface and $s^2+\kappa\lambda^2<0$. Even though the value was already given in Ginzburg \cite[Ex. 4.7] {Gb04}, we wrote down a detailed proof as we could not find the details in the literature.

\begin{theorem}\label{thm3}
If $s^2+\kappa\lambda^2<0$ the Hofer--Zehnder capacity of $(D_\lambda\Sigma,\omega_s)$ relative to $D_{ \frac{\vert s\vert}{\sqrt{-\kappa}}}\Sigma$ is given by
$$
c_{\mathrm{HZ}}^\nu\left(D_\lambda \Sigma, D_{\frac{\vert s\vert}{\sqrt{-\kappa}}}\Sigma, \omega_s\right)=l_\nu\sqrt{-(\kappa\lambda^2+s^2)},
$$
where $l_\nu$ denotes the shortest length of a closed geodesic in the free-homotopy class $\nu$ of loops in $\Sigma$. 
\end{theorem}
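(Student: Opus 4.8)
The plan is to isolate the part of the dynamics that actually sees the class $\nu$ and then import Weber's computation through the Ginzburg--Gürel identification. First I would record that for $\kappa<0$ and $s^2+\kappa\lambda^2<0$ the kinetic energy $E=\tfrac12 g_\kappa(v,v)$ has a critical speed $r_0=\vert s\vert/\sqrt{-\kappa}$: a magnetic geodesic of speed $r=\vert v\vert$ is a curve of constant geodesic curvature $k=s/r$, so for $r<r_0$ one has $\vert k\vert>\sqrt{-\kappa}$ and the orbits are contractible Larmor circles, whereas for $r>r_0$ one has $\vert k\vert<\sqrt{-\kappa}$ and in every free homotopy class $\nu$ there is a closed orbit, realised by an equidistant curve. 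Thus the relative core $D_{\vert s\vert/\sqrt{-\kappa}}\Sigma$ is exactly the subcritical region, it carries no orbit in the class $\nu$, and all relevant dynamics lives in the supercritical annulus bundle $D_\lambda\Sigma\setminus\mathrm{cl}\!\left(D_{r_0}\Sigma\right)$.

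For the lower bound I would, as in the proof of Theorem \ref{thm1}, feed the energy into an admissible profile: choose $H=f(E)\in\HH(D_\lambda\Sigma,D_{r_0}\Sigma)$ with $f\equiv 0$ near $D_{r_0}\Sigma$ and $f$ increasing to $\max H$ near $\partial D_\lambda\Sigma$. The $X_H$-orbits in class $\nu$ are reparametrised magnetic geodesics, and keeping $f'$ below the class-$\nu$ return time on every supercritical level kills all orbits of period $\le 1$; optimising $f$ then pushes $\max H$ up to the symplectic action of the $\nu$-orbit on the boundary level $r=\lambda$, which yields the lower bound.

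The upper bound is where Weber's theorem enters. I would construct a fibrewise symplectomorphism $F$ from the supercritical region onto a standard (untwisted) disc bundle, sending the critical sphere bundle $\{\vert v\vert=r_0\}$ to the zero section and intertwining the magnetic geodesic flow in class $\nu$ with the geodesic flow in class $\nu$; this is the symplectisation-of-fibres and compactification step of Theorem \ref{thm1}, specialised to the annulus and extended across $r_0$. Granting such an $F$, the Ginzburg--Gürel identification of $c_{\mathrm{HZ}}^\nu$ with the BPS-capacity for standard bundles together with \eqref{rell} evaluates the relative capacity, and it only remains to read off the scale. Writing the structure equations on the unit tangent bundle as $\omega_s=\mathrm d\!\left(r\alpha+\tfrac{s}{\kappa}\psi\right)$, where $\alpha$ is the canonical contact form and $\psi$ the Levi--Civita connection form, the action of the $\nu$-orbit at level $r$ is $\int_{\gamma_r}(r\alpha+\tfrac s\kappa\psi)$. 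Using $\int_{\gamma_r}\alpha=L_k$ with the equidistant-curve length $L_k=l_\nu\sqrt{-\kappa}/\sqrt{-\kappa-k^2}$, $k=s/r$, and the turning integral $\int_{\gamma_r}\psi=\int k\,\mathrm ds=kL_k$ (the geodesic curvature is the constant $k$), this action is monotone in $r$, and its value at $r=\lambda$ is a definite multiple of $l_\nu\sqrt{-(\kappa\lambda^2+s^2)}$, matching the lower bound.

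I expect the genuine difficulty to be twofold. The construction of $F$ is delicate precisely at the critical level $r_0$: there the orbits degenerate to non-closing horocycles and the naive primitive $r\alpha+\tfrac s\kappa\psi$ does not vanish on the core, so the contact structures on the levels $\{r=\mathrm{const}\}$ genuinely change and the map cannot be purely radial. Controlling this degeneration — and, relatedly, pinning down the relative normalization so that the constant in front of $l_\nu$ comes out as exactly $\sqrt{-(\kappa\lambda^2+s^2)}$ rather than an off-by-curvature multiple — is the heart of the argument; once the symplectomorphism is in hand the two bounds coincide and the theorem follows.
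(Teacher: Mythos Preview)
Your overall strategy --- build a symplectomorphism from the supercritical region to a standard (untwisted) disc bundle and then invoke Weber via the Ginzburg--G\"urel identification --- is exactly what the paper does. The execution in the paper, however, is more direct than your outline, and the difficulty you anticipate does not arise.

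Normalising to $\kappa=-1$ (so $r_0=s$), the paper makes the same ansatz as in Section~\ref{sec4}, namely $F=m_{a_s(r)}\circ\phi_{b_s(r)}$, a fibre rescaling composed with the time-$b_s(r)$ flow of $-H$, and imposes $F^*\alpha=\alpha-s\tau$. By Lemmas~\ref{lem4} and~\ref{lem3} this yields two scalar equations solved explicitly by
\[
a_s(r)=\frac{\sqrt{r^2-s^2}}{r},\qquad b_s(r)=\frac{1}{r}\tanh^{-1}\!\Big(-\frac{s}{r}\Big),
\]
producing a symplectomorphism $F:(D_s^\lambda\Sigma,\omega_s)\to\big(D_0^{\sqrt{\lambda^2-s^2}}\Sigma,\omega_0\big)$. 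This \emph{is} of the simple ``radial'' type you doubted could work: the horocycle degeneration at $r\to s$ is harmless because $a_s(r)\to 0$ there, collapsing the critical level towards the zero section. More to the point, one never needs to extend $F$ across $r_0$. Any $H\in\HH(D_\lambda\Sigma,D_s\Sigma)$ vanishes on a neighbourhood of $\overline{D_s\Sigma}$, hence is supported in some $D_{s'}^\lambda\Sigma$ with $s'>s$, and transports under $F$ to an element of $\HH(D_{\sqrt{\lambda^2-s^2}}\Sigma,\Sigma)$; the converse is equally immediate by extension by zero. Thus a single chain
\[
c_{\mathrm{HZ}}^\nu\big(D_\lambda\Sigma,D_s\Sigma,\omega_s\big)
= c_{\mathrm{HZ}}^\nu\big(D_{\sqrt{\lambda^2-s^2}}\Sigma,\Sigma,\omega_0\big)
= \sqrt{\lambda^2-s^2}\; c_{\mathrm{HZ}}^\nu\big(D_1\Sigma,\Sigma,\omega_0\big)
= l_\nu\sqrt{\lambda^2-s^2}
\]
gives both bounds at once, the middle equality being the homogeneity of the relative capacity under $m_a$. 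Your separate lower-bound argument via $H=f(E)$ and the equidistant-curve action computation are correct but superfluous once $F$ is available; the scale $\sqrt{-(\kappa\lambda^2+s^2)}$ falls out directly from $r\,a_s(r)\big|_{r=\lambda}=\sqrt{\lambda^2-s^2}$ rather than from an action integral.
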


\noindent
\textbf{Structure of the paper.}\\
In Section \ref{sec2} we introduce twisted tangent bundles, a suitable frame of $T(D_0^\lambda\Sigma)$ and explain some basics about magnetic systems on surfaces. Section \ref{sec:3} explains how we obtain an upper bound for the Hofer--Zehnder capacity with the help of a theorem by G. Lu. We give the definition of a pseudo-capacity in Section \ref{sec3.2} and explain how it is related to the Hofer--Zehnder capacity. In Section \ref{sec3.3} we state the theorem by G. Lu that gives an upper bound for the pseudo-capacity in terms of the symplectic area of a homology class for which a Gromov--Witten invariant does not vanish.
This Gromov--Witten invariant is determined in Section \ref{sec3.1}. We continue with proving Theorem \ref{thm1} in section \ref{sec4} and Theorem \ref{thm3} in Section \ref{sec5}. The equivariant symplectomorphism of Theorem $\ref{thm4}$ is then built in Section \ref{sec6}.\\

\noindent
\textbf{Acknowledgment.}\\
I want to thank JProf.\ Gabriele Benedetti for suggesting the topic of this paper and his unlimited support and help guiding me through the process of writing. I also want to thank Valerio Assenza and Maximilian Schmahl for the helpful discussions and their devoted proofreading.\\
The author further acknowledges funding by the Deutsche Forschungsgemeinschaft (DFG, German Research Foundation) – 281869850 (RTG 2229). 
\section{Twisted tangent bundles}\label{sec2}
In this section we quickly introduce all notations and tools on twisted tangent bundles we need. More details can be found in \cite{Bd14}. Let $\Sigma$ be a smooth connected orientable closed surface, let $g=g_\kappa$ be a Riemannian metric of constant curvature $\kappa$ on $\Sigma$. We will study the tangent (and cotangent) bundle of this surface. Denote by $\Pi:T^*\Sigma\to \Sigma$ the canonical projection. The metric defines an isomorphism
\begin{equation}\label{eq:4}
    T_x\Sigma\to T_x^*\Sigma;\qquad v\mapsto p:=g_x(v,\ \cdot\ ).
\end{equation}

\noindent
We can pointwise define the Liouville 1-form on $T^*\Sigma$
$$
\alpha_{(x,p)}:=p\circ \dd\Pi_{(x,p)}:\ T_{(x,p)}T^*\Sigma\to \R,
$$
which can be pulled back via the metric isomorphism \eqref{eq:4} to a canonical 1-form on $T\Sigma$ also denoted by $\alpha$
$$
\alpha_{(x,v)}:\ T_{(x,v)}T\Sigma\to \R;\qquad \xi\mapsto g_x(v,\dd\pi_{(x,v)}\xi).
$$
Here, $\pi:T\Sigma\to \Sigma$ is the canonical projection. The exterior derivative $\omega_0:=\dd\alpha$ defines a canonical symplectic form on either $T\Sigma$ or $T^*\Sigma$.
Furthermore we denote by  $\sigma\in\Omega^2(\Sigma)$ the Riemannian area form with respect to a given orientation on $\Sigma$. We will study the twisted tangent bundle
$$
(T\Sigma,\omega_s:=\dd\alpha-s\pi^*\sigma)
$$
for some fixed $s\in\R$.
We restrict our attention to disc-bundles
$$
D_\lambda \Sigma:=\lbrace (x,v)\ \vert g_x(v,v) < \lambda^2\rbrace ,\qquad \lambda\geq 0.
$$
We can further reduce to the case $s\geq 0$ as changing the orientation of $\Sigma$ corresponds to exchanging $s$ and $-s$ and everything we do works for an arbitrary orientation of $\Sigma$.\\
\noindent
One defines a complex structure $\iota$ on $\Sigma$ via the relation
$$
\sigma_x(\ \cdot\ ,\ \cdot\ )= g_x(\iota_x\cdot,\ \cdot\ ).
$$
\noindent
Denote by $T^0\Sigma$ the tangent bundle without the zero section. We will now define a frame of $T(T^0\Sigma)$. Denote by 
$$\mathcal{L}^\mathcal{V}_{(x,v)}: T_x\Sigma\to T_{(x,v)}T\Sigma
$$
the vertical lift associated to the canonical projection $\pi:T\Sigma\to \Sigma$ and by
$$\mathcal{L}^\mathcal{H}_{(x,v)}: T_x\Sigma\to T_{(x,v)}T\Sigma
$$
the horizontal lift induced by the Levi-Civita connection $\nabla$ of $(\Sigma,g)$. 
A frame of $T(T^0\Sigma)$ is given by
\begin{align*}
    Y_{(x,v)}&:=\LL^{\VV}_{(x,v)}(v),\\
V_{(x,v)}&:=\LL^\VV_{(x,v)}(\iota_xv),\\
X_{(x,v)}&:=\LL^\HH_{(x,v)}(v),\\
H_{(x,v)}&:=\LL^\HH_{(x,v)}(\iota_x v).
\end{align*}
These vector fields satisfy the commutator relations
\begin{align*}
    [Y,X]&=X,\ \ \ \ \ [Y,H]=H,\ \ \ \ \  [Y,V]=0,\ \ \ \\
    [V,X]&=H,\ \ \ \ \ [V,H]=-X,\ \ \ \ \  [X,H]=2E\kappa V,\ \ \ 
\end{align*}
where $E: TM\to \R; (x,v)\mapsto \frac{1}{2}g_x(v,v)$ is the kinetic energy. The dual co-frame of $(Y,V,X,H)$ is given by 
$$
\left ( \frac{\dd E}{2E},\tau,\frac{\alpha}{2E},\frac{\eta}{2E}\right),
$$
where $\tau$ is the angular form associated to the Levi-Civita connection and $\eta:=\iota^*\alpha$. One can use the formula
$$
\dd\xi(U_1,U_2)=U_1(\xi(U_2))-U_2(\xi(U_1))-\xi([U_1,U_2])
$$ for an arbitrary one-form $\xi$ and vector fields $U_1,U_2$, to calculate exterior derivatives and finds
\begin{align}\label{rel}
    \dd\tau&=-\frac{\kappa}{2E}\alpha\wedge\eta=-\kappa\pi^*\sigma,\\
    \dd\alpha&=\frac{1}{2E}\dd E\wedge\alpha+\tau\wedge\eta,\\
    \dd\eta&=\frac{1}{2E}\dd E\wedge \eta+\alpha\wedge\tau,
\end{align}
where $\sigma$ is the area form on $\Sigma$.
\noindent
Using these relations we can prove the following proposition that will be needed in the search for Hamiltonian orbits.
\begin{Proposition}
The Hamiltonian vector field $X_E$ corresponding to the kinetic Hamiltonian $E(x,v)=\frac{1}{2}\vert v\vert^2$ is given by
$$
X_E=X+sV.
$$
\end{Proposition}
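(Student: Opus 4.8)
The plan is to compute $X_E$ directly in the frame $(Y,V,X,H)$ by exploiting the given dual coframe $\left(\tfrac{\dd E}{2E},\tau,\tfrac{\alpha}{2E},\tfrac{\eta}{2E}\right)$. Since these four one-forms are dual to $(Y,V,X,H)$, writing $X_E=aY+bV+cX+dH$ lets one read off the components $a,b,c,d$ simply by evaluating the coframe on $X_E$, so the whole problem reduces to expressing the defining equation $\iota_{X_E}\omega_s=-\dd E$ in the coframe and matching coefficients. The sign convention for the Hamiltonian vector field is the one point where care is needed: taking $\iota_{X_E}\omega_s=-\dd E$ is what produces the stated $X+sV$ rather than its negative.

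First I would rewrite $\omega_s=\dd\alpha-s\pi^*\sigma$ purely in terms of the coframe. The term $\dd\alpha$ is already given in \eqref{rel} as $\tfrac{1}{2E}\dd E\wedge\alpha+\tau\wedge\eta$. For the magnetic term I would use the identity $\pi^*\sigma=\tfrac{1}{2E}\alpha\wedge\eta$; for $\kappa\neq 0$ this is immediate from $\dd\tau=-\tfrac{\kappa}{2E}\alpha\wedge\eta=-\kappa\pi^*\sigma$, and for $\kappa=0$ (where $\dd\tau$ carries no information about $\pi^*\sigma$) I would instead verify it by evaluating both sides on the frame, using $\dd\pi\,X=v$, $\dd\pi\,H=\iota_x v$, the vanishing of $\dd\pi$ on $Y,V$, and $\sigma_x(\cdot,\cdot)=g_x(\iota_x\cdot,\cdot)$, which gives $\pi^*\sigma(X,H)=|v|^2=2E$. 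Substituting $\dd E=2E\cdot\tfrac{\dd E}{2E}$, $\alpha=2E\cdot\tfrac{\alpha}{2E}$ and $\eta=2E\cdot\tfrac{\eta}{2E}$ then expresses $\omega_s$ as a combination of the wedge products $\tfrac{\dd E}{2E}\wedge\tfrac{\alpha}{2E}$, $\tau\wedge\tfrac{\eta}{2E}$ and $\tfrac{\alpha}{2E}\wedge\tfrac{\eta}{2E}$, each with coefficient a multiple of $2E$.

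With $\omega_s$ in this form, I would substitute the ansatz $X_E=aY+bV+cX+dH$ and compute $\iota_{X_E}\omega_s$ via the identity $\iota_{X_E}(\beta\wedge\gamma)=\beta(X_E)\,\gamma-\gamma(X_E)\,\beta$ for one-forms $\beta,\gamma$, reading off $\beta(X_E)$ from duality (so $\tfrac{\dd E}{2E}(X_E)=a$, $\tau(X_E)=b$, $\tfrac{\alpha}{2E}(X_E)=c$, $\tfrac{\eta}{2E}(X_E)=d$). Setting the result equal to $-\dd E$ and comparing the coefficients of the four coframe one-forms yields a small linear system whose unique solution is $a=0$, $b=s$, $c=1$, $d=0$, that is, $X_E=X+sV$.

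I expect the only real obstacle to be the bookkeeping of signs: the convention $\iota_{X_E}\omega_s=\pm\dd E$, the signs in the structure relations \eqref{rel}, and the antisymmetry of the interior product must all be tracked consistently, since an even number of sign slips would still give a vector field of the correct shape but with the wrong coefficients. The $\kappa=0$ case also deserves a separate (if routine) check, precisely because there the identity $\pi^*\sigma=\tfrac{1}{2E}\alpha\wedge\eta$ cannot be extracted from the relation for $\dd\tau$ and must be confirmed by direct evaluation on the frame.
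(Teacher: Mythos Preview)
Your proposal is correct and follows essentially the same approach as the paper: express $\omega_s$ via the structure relations, contract with $X_E$, and match coefficients against $-\dd E$ to pin down the components in the frame $(Y,V,X,H)$. The only cosmetic difference is that you make the ansatz $X_E=aY+bV+cX+dH$ explicit and use the coframe duality to name the unknowns, whereas the paper simply records the values $\alpha(X_E),\tau(X_E),\eta(X_E),\dd E(X_E)$ directly; your extra care about the $\kappa=0$ case (verifying $\pi^*\sigma=\tfrac{1}{2E}\alpha\wedge\eta$ by evaluation on the frame rather than reading it off from $\dd\tau$) is a nice touch the paper glosses over.
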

\begin{proof}
The vector field $X_E$ is defined via 
$$
-\dd E=\omega_s(X_E,\ \cdot\ ).
$$
This translates to
\begin{align*}
    -\dd E &= \dd \alpha(X_E,\ \cdot\ )-s\pi^*\sigma(X_E,\ \cdot\ )\\
    &=\frac{1}{2E}\dd E\wedge\alpha (X_E,\ \cdot\ )+\tau\wedge\eta(X_E,\ \cdot\ )-\frac{s}{2E}\alpha\wedge\eta(X_E,\ \cdot\ )\\
    &= -\frac{\alpha(X_E)}{2E}\dd E+\frac{\dd E(X_E)}{2E}\alpha+\tau(X_E)\eta-  \eta(X_E)\tau+\frac{s\eta(X_E)}{2E}\alpha-\frac{s\alpha(X_E)}{2E}\eta,
\end{align*}
and we read off
\begin{align*}
    1&=\frac{\alpha(X_E)}{2E}\\
    0&=-\tau(X_E)+s\frac{\alpha(X_E)}{2E}\\
    0&=-\frac{\dd E(X_E)}{2E}-s\frac{\eta(X_E)}{2E}\\
    0&=\eta(X_E)
\end{align*}
which is uniquely solved by $X_E=X+sV$.
\end{proof}

\noindent
In the proofs of Theorem \ref{thm1} and Theorem \ref{thm3} we will further need the following two lemmas from \cite[Thm.\ A.1.]{BR19}, that are again proven using the relations (\ref{rel})-(5).
\begin{Lemma}\label{lem4}
Denote by $\Phi_b$ the flow of $-H$ for time $b\in \R$, then
\begin{align*}
   \Phi_b^*\tau&=-\frac{\sqrt{\kappa}}{\sqrt{2E}}\sin(\sqrt{2E\kappa}b)\alpha+\cos(\sqrt{2E\kappa}b)\tau, \\
   \Phi_b^*\alpha&=\cos(\sqrt{2E\kappa}b)\alpha+\frac{\sqrt{2E}}{\sqrt{\kappa}}\sin(\sqrt{2E\kappa}b) \tau,
\end{align*}
where we keep the relations $\sin(ia)=i\sinh(a)$ and $\cos(ia)=\cosh(a)$ in mind.
\end{Lemma}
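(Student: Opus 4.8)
The plan is to convert the statement into a linear system of ordinary differential equations for the families of one-forms $b\mapsto\Phi_b^*\alpha$ and $b\mapsto\Phi_b^*\tau$, and then solve it explicitly. The basic tool is the standard identity for the flow $\Phi_b$ of a vector field: for any differential form $\xi$,
$$
\frac{\dd}{\dd b}\Phi_b^*\xi=\Phi_b^*\big(\mathcal{L}_{-H}\xi\big),
$$
where $\mathcal{L}_{-H}$ denotes the Lie derivative along $-H$. So the first step is to compute the two Lie derivatives $\mathcal{L}_{-H}\alpha$ and $\mathcal{L}_{-H}\tau$, expressing the answers again in terms of $\alpha$ and $\tau$; this is exactly what makes the system close up.

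To carry this out I would use Cartan's formula $\mathcal{L}_{-H}=\iota_{-H}\,\dd+\dd\,\iota_{-H}$ together with the dual co-frame data $\alpha(H)=\eta(H)=2E$ on the relevant frame fields and $\alpha(H)=\tau(H)=0$, so that the $\dd\iota_{-H}$ terms drop out and only $\iota_{-H}\dd$ survives. Feeding in the structure equations \eqref{rel}--(5), namely $\dd\tau=-\frac{\kappa}{2E}\alpha\wedge\eta$ and $\dd\alpha=\frac{1}{2E}\dd E\wedge\alpha+\tau\wedge\eta$, and contracting with $H$ using $\iota_H(\alpha\wedge\eta)=-2E\,\alpha$ and $\iota_H(\tau\wedge\eta)=-2E\,\tau$ (while the $\dd E\wedge\alpha$ term vanishes because $\dd E(H)=\alpha(H)=0$), one obtains
$$
\mathcal{L}_{-H}\tau=-\kappa\,\alpha,\qquad \mathcal{L}_{-H}\alpha=2E\,\tau.
$$
A crucial auxiliary observation, which I would record explicitly, is that $E$ is conserved along the flow: since $\dd E(H)=0$ we have $\mathcal{L}_{-H}E=0$, hence $E\circ\Phi_b=E$. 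This is what guarantees that the pullback $\Phi_b^*(2E\,\tau)$ equals $2E\,\Phi_b^*\tau$ with the \emph{same} function $E$, so the resulting system has coefficients constant in $b$.

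Combining these facts yields the coupled constant-coefficient system
$$
\frac{\dd}{\dd b}\Phi_b^*\alpha=2E\,\Phi_b^*\tau,\qquad
\frac{\dd}{\dd b}\Phi_b^*\tau=-\kappa\,\Phi_b^*\alpha,
$$
with initial data $\Phi_0^*\alpha=\alpha$ and $\Phi_0^*\tau=\tau$. Differentiating once more shows each family solves the harmonic-oscillator equation $\ddot u+2E\kappa\,u=0$ with angular frequency $\sqrt{2E\kappa}$, and matching the initial values and initial derivatives $\frac{\dd}{\dd b}\Phi_b^*\alpha|_{0}=2E\,\tau$, $\frac{\dd}{\dd b}\Phi_b^*\tau|_{0}=-\kappa\,\alpha$ pins down the coefficients as the claimed $\cos(\sqrt{2E\kappa}\,b)$, $\frac{\sqrt{2E}}{\sqrt{\kappa}}\sin(\sqrt{2E\kappa}\,b)$, and $-\frac{\sqrt{\kappa}}{\sqrt{2E}}\sin(\sqrt{2E\kappa}\,b)$. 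The identities $\sin(ia)=i\sinh a$, $\cos(ia)=\cosh a$ then reinterpret the formulas in the case $\kappa<0$, where $\sqrt{2E\kappa}$ is imaginary, so a single analytic expression covers all three curvature signs.

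I expect the only real obstacle to be the bookkeeping in the Lie-derivative computation: one must be careful with signs in the interior products of the two-forms $\alpha\wedge\eta$ and $\tau\wedge\eta$ and with the minus sign coming from flowing along $-H$ rather than $H$. Everything else is a routine second-order linear ODE. A minor point worth a remark is completeness of $\Phi_b$ on $T^0\Sigma$, which follows from the conservation of $E$ since the flow preserves the (compact) energy levels, so the formulas hold for all $b\in\R$.
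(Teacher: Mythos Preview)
Your proposal is correct and follows essentially the same route as the paper: compute the Lie derivatives of $\alpha$ and $\tau$ along $-H$ via Cartan's formula and the structure equations \eqref{rel}--(5), observe that $E$ is flow-invariant, and solve the resulting constant-coefficient linear ODE in $b$. The only difference is cosmetic: the paper expands in the full coframe $(\alpha,\eta,\tau,\dd E)$ and obtains a $4\times4$ system whose $\eta$- and $\dd E$-components vanish identically, whereas you note from the start that $\mathcal{L}_{-H}\alpha=2E\,\tau$ and $\mathcal{L}_{-H}\tau=-\kappa\,\alpha$ already close up in $\mathrm{span}\{\alpha,\tau\}$, yielding the $2\times2$ system directly. (One slip in your write-up: you state ``$\alpha(H)=\eta(H)=2E$'', but in fact $\alpha(H)=0$ and only $\eta(H)=2E$; your subsequent contractions use the correct values.)
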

\begin{proof}
Let $\xi$ be some one-form. We make the ansatz $\Phi^*_b\xi=x\alpha+y\eta+z\tau+w\dd E$, then
\begin{align*}
    0&=\frac{\dd}{\dd b}\Phi^*_{-b}\Phi^*_b\lambda=\frac{\dd}{\dd b}\Phi^*_{-b}(x\alpha+y\eta+z\tau+w\dd E)\\
    &= \dot x\alpha+\dot y\eta+\dot z\tau+\dot w\dd E+x\LL_H\alpha+y\LL_H\eta+z\LL_H\tau+w\LL_H\dd E\\
    &=\dot x\alpha+\dot y\eta+\dot z\tau+\dot w\dd E+x\iota_H\dd\alpha+y\iota_H\dd\eta+y\dd(\iota_H\eta)+z\iota_H\dd\tau\\
    &=\dot x\alpha+\dot y\eta+\dot z\tau+\dot w\dd E-2Ex\tau-y\dd E+2y\dd E+\kappa z\alpha
\end{align*}
where we applied the relations (\ref{rel})-(5).
Thus we find a system of ordinary differential equations
$$
\dot x=-\kappa z,\ \ \dot y=0,\ \ \dot z=2Ex,\ \ \dot w=-y,
$$
which can be solved uniquely by fixing initial conditions. In the case $\xi=\tau$ these are 
$$x(0)=0,\ \ y(0)=0,\ \, z(0)=1,\ \, w(0)=0.$$
Then the solution is
$$
x(b)=- \frac{\sqrt{\kappa}}{\sqrt{2E}}\sin(\sqrt{2E\kappa}b),\ \  y(b)=0,\ \ z(b)=\cos(\sqrt{2E\kappa}b),\ \ w(b)=0.
$$
Similarly, we can choose the initial conditions such that $\xi=\alpha$. Then
$$x(0)=1,\ \ y(0)=0,\ \, z(0)=0,\ \, w(0)=0$$
and the solution is
$$
x(b)=\cos(\sqrt{2E\kappa}b),\ \  y(b)=0,\ \ z(b)= \frac{\sqrt{2E}}{\sqrt{\kappa}}\sin(\sqrt{2E\kappa}b),\ \ w(b)=0.\ \qedhere
$$
\end{proof}
\noindent
\begin{Definition}\label{def}
For a real number $a\in\R$ denote by
$$
m_a: T\Sigma\to T\Sigma;\qquad (x,v)\mapsto (x,av)
$$
the flow of $Y$ for time $\ln(a)$.
\end{Definition}
\noindent
Analogouly to the proof of Lemma \ref{lem4} one shows the following lemma.
\begin{Lemma}\label{lem3}
The scaling of fibers $m_a$ satisfies
$$
    m^*_a\tau=\tau,\qquad
    m^*_a\alpha=a\alpha,\qquad
    m^*_a\eta=a\eta.\qquad\qedsymbol
$$
\end{Lemma}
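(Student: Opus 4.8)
The plan is to use Definition~\ref{def}, which presents $m_a$ as the time-$\ln a$ flow of the vertical Euler field $Y$, and then to run the flow-pullback argument from the proof of Lemma~\ref{lem4} verbatim, with $Y$ in place of $-H$. Writing $\Psi_t$ for the flow of $Y$, so that $m_a=\Psi_{\ln a}$, the identity $\frac{\dd}{\dd t}\Psi_t^*\xi=\Psi_t^*\mathcal{L}_Y\xi$ reduces the three claims to computing the Lie derivatives $\mathcal{L}_Y\alpha$, $\mathcal{L}_Y\eta$ and $\mathcal{L}_Y\tau$.

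First I would record the contractions of $Y$ against the coframe. Since $(Y,V,X,H)$ is dual to $\left(\frac{\dd E}{2E},\tau,\frac{\alpha}{2E},\frac{\eta}{2E}\right)$, one reads off $\alpha(Y)=\eta(Y)=\tau(Y)=0$ and $\dd E(Y)=2E$. Applying Cartan's formula $\mathcal{L}_Y=\iota_Y\dd+\dd\iota_Y$ together with the structure equations (\ref{rel})--(5), every $\iota_Y$ of a wedge of two coframe elements collapses, and only the terms $\frac{1}{2E}\dd E\wedge(\cdot)$ survive: I expect $\mathcal{L}_Y\alpha=\alpha$, $\mathcal{L}_Y\eta=\eta$ and $\mathcal{L}_Y\tau=0$, the scaling coming precisely from $\frac{1}{2E}\dd E(Y)=1$.

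With the ansatz $\Psi_t^*\xi=x\alpha+y\eta+z\tau+w\,\dd E$ exactly as in Lemma~\ref{lem4}, these diagonal Lie derivatives decouple the resulting system of ordinary differential equations --- there is no mixing through $\kappa$ or $E$ as there was for $-H$. Solving with the initial condition $\Psi_0^*=\mathrm{id}$ gives $\Psi_t^*\tau=\tau$, $\Psi_t^*\alpha=e^{t}\alpha$ and $\Psi_t^*\eta=e^{t}\eta$, and evaluating at $t=\ln a$ produces the three identities $m_a^*\tau=\tau$, $m_a^*\alpha=a\alpha$, $m_a^*\eta=a\eta$.

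The only place that demands care --- and hence the modest main obstacle --- is the evaluation of $\iota_Y\dd\alpha$, $\iota_Y\dd\eta$ and $\iota_Y\dd\tau$: one must keep the duality pairings straight so that the $\tau\wedge\eta$, $\alpha\wedge\tau$ and $\alpha\wedge\eta$ pieces drop out while the radial pieces remain. Compared with Lemma~\ref{lem4} this is genuinely simpler, since $\mathcal{L}_Y$ acts diagonally on the coframe and no trigonometric integration is required.
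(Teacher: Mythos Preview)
Your proposal is correct and follows exactly the route the paper indicates: it says the lemma is proved ``analogously to the proof of Lemma~\ref{lem4}'', i.e.\ by the flow-pullback ODE argument with $Y$ replacing $-H$. Your observation that $\mathcal{L}_Y$ acts diagonally on the coframe (so the system decouples and integrates to $e^t$ or $1$) is precisely the simplification one finds when carrying this out.
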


\section{Hofer--Zehnder capacity via Gromov--Witten invariants}\label{sec:3}
As discovered by Hofer--Viterbo \cite{HV92} and Liu--Tian \cite{LT00}, there is a really interesting connection between the existence of closed trajectories and Gromov--Witten invariants. G. Lu \cite{Lu06} uses their results to show that under certain assumptions a non-vanishing Gromov--Witten invariant for a homology class $[A]\in H_2(M)$ implies that the symplectic area $\omega([A])$ is an upper bound for the Hofer--Zehnder capacity. In this chapter we shall first introduce a pseudo symplectic capacity, then quickly introduce pseudoholomorphic curves and Gromov--Witten invariants and explain Lu's theorem \cite[Thm.\ 1.10]{Lu06} in the trivial sphere bundle $S^2\times\Sigma$, which is the set up we are interested in.

\subsection{Pseudo symplectic capacity of Hofer--Zehnder type}\label{sec3.2}
G. Lu \cite{Lu06} relates Gromov--Witten invariants and the Hofer--Zehnder capacity using the more general notion of pseudo-capacities.

\begin{Definition}\label{def:3}\ \\
For a connected symplectic manifold $(M,\omega)$ of dimension at least four and two nonzero homology classes $a_-,a_+ \in H_*(M)$, we call a smooth function $H : M \to \R$, $(a_-,a_+)$-admissible if there exist two compact submanifolds $P$ and $Q$ of $M$ with connected smooth boundaries and of codimension zero such that the following conditions hold:
\begin{enumerate}
    \item  $P \subset \mathrm{Int}(Q)$ and $Q \subset \mathrm{Int}(M)$ ; 
    \item $H\vert_P = 0$ and $H\vert_{M\setminus \mathrm{Int}(Q)} = \max H$;
    \item $0 \leq H \leq \max H$;
    \item There exist cycle representatives of $a_-$ and $a_+$, still denoted by $a_-,a_+$, such that $\mathrm{supp}(a_-) \subset \mathrm{Int}(P)$ and $\mathrm{supp}(a_+) \subset M\setminus Q$;
    \item There are no critical values in $(0,\varepsilon) \cup (\max H - \varepsilon,\max H)$ for a small $\varepsilon = \varepsilon(H) > 0$.
\end{enumerate}
We denote by $\mathcal{H}_{a}(M;a_-,a_+)$ the set of all $(a_-,a_+)$-admissible functions. We call 
$$
C^{(2)}_{\mathrm{HZ}}(M,\omega;a_-,a_+) := \sup\lbrace\max H\ \vert\ H\in \HH_{a}(M;a_-,a_+), \mathcal{P}_{\leq 1}(H)=\emptyset\rbrace
$$
pseudo symplectic capacity of Hofer--Zehnder type. We further denote by
$$
C^{(2o)}_{\mathrm{HZ}}(M,\omega;a_-,a_+) := \sup\lbrace\max H\ \vert\ H\in \HH_{a}(M;a_-,a_+), \mathcal{P}_{\leq 1}(H;0)=\emptyset\rbrace
$$
the pseudo symplectic capacity of Hofer--Zehnder type with respect to the homotopy class of contractible loops.
\begin{figure}
    \centering
    \includegraphics[width=0.6\textwidth]{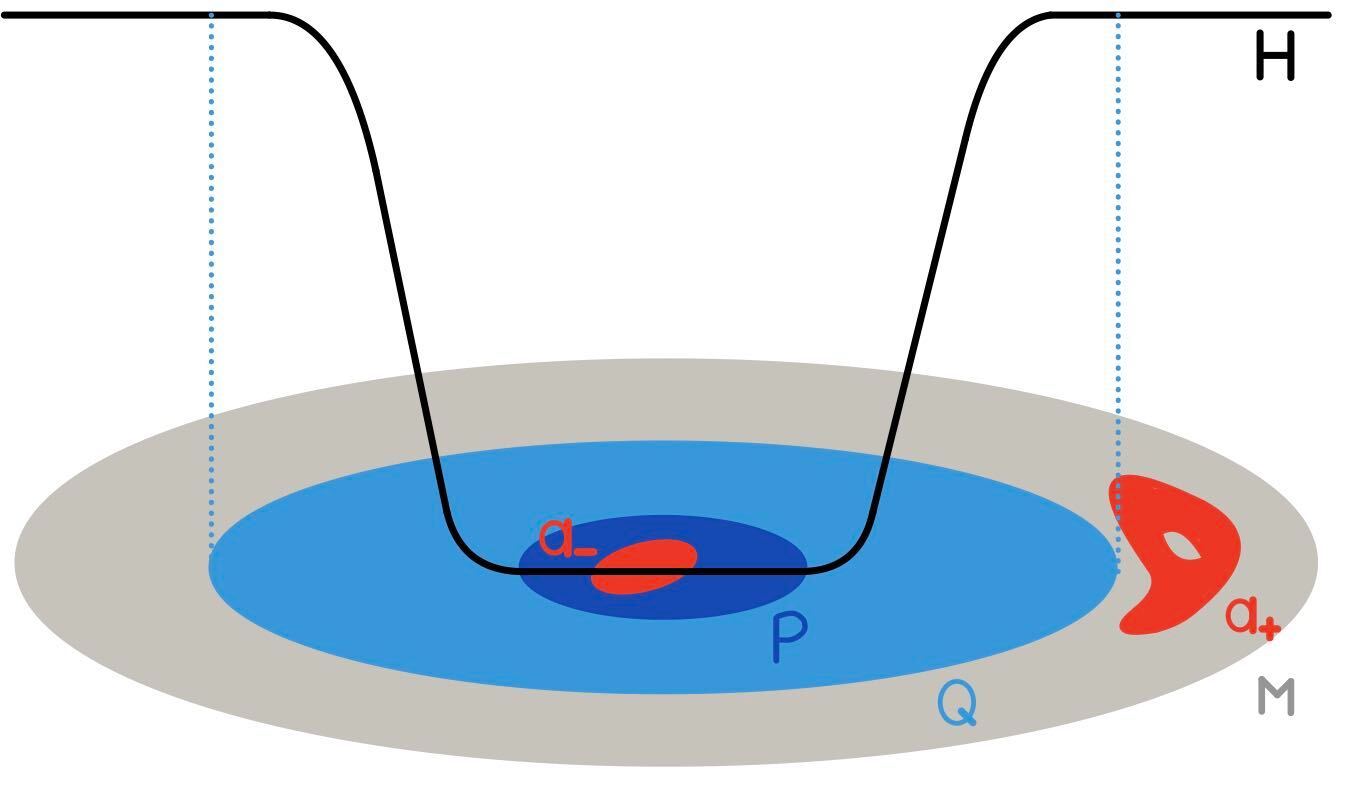}
    \caption{Sketch of a $(a_-,a_+)$-admissible Hamiltonian.}
    \label{figH}
\end{figure}
\end{Definition}
\noindent
Observe that clearly
$$
    C^{(2)}_{\mathrm{HZ}}(M,\omega;a_-,a_+)\leq C^{(2o)}_{\mathrm{HZ}}(M,\omega;a_-,a_+).
$$
We further note this mostly trivial lemma.
\begin{Lemma}\label{lem1}
 If $S$ is some section of $S^2\times \Sigma\to\Sigma$, then the following holds
\begin{itemize}
    \item[(i)] $C^{(2o)}_{\mathrm{HZ}}(S^2\times \Sigma\setminus S,\omega;[pt],[pt])= c_{\mathrm{HZ}}^0(S^2\times \Sigma\setminus S,\omega),$
        \item[(ii)]     $C^{(2o)}_{\mathrm{HZ}}(S^2\times \Sigma\setminus S,\omega;[pt],[pt])\leq C_{\mathrm{HZ}}^{(2o)}(S^2\times \Sigma,\omega,[pt],[S]),$
\end{itemize}
for an arbitrary symplectic form $\omega$.
\end{Lemma}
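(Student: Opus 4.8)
I would prove the two statements separately. Throughout write $M:=S^2\times\Sigma$ and $M':=M\setminus S$, and recall that $M'$ is an $\R^2$-bundle over $\Sigma$ (fibrewise $S^2$ minus a point), hence homotopy equivalent to $\Sigma$ via any section disjoint from $S$; in particular its single end is the connected sphere bundle over $\Sigma$. The inequality $C^{(2o)}_{\mathrm{HZ}}(M',\omega;[pt],[pt])\le c^0_{\mathrm{HZ}}(M',\omega)$ in (i) is immediate from the definitions: if $H$ is $([pt],[pt])$-admissible with associated submanifolds $P\subset\mathrm{Int}(Q)$, then setting $K:=Q$ and $U:=\mathrm{Int}(P)$ exhibits $H$ as an element of $\HH(M')$, since condition (2) gives $H|_{M'\setminus K}=\max H$ and $H|_U=0$, and $U$ is a non-empty open subset of $K$ because $P$ has codimension zero and contains the support of $[pt]$. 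As the forbidden-orbit condition $\mathcal{P}_{\le1}(H;0)=\emptyset$ is literally the same on both sides, the left supremum runs over a subset and the inequality follows.

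For the reverse inequality in (i) I would start from an arbitrary $H\in\HH(M')$ with $\mathcal{P}_{\le1}(H;0)=\emptyset$ and produce, for every small $\delta>0$, a $([pt],[pt])$-admissible $H_\delta$ with $\max H_\delta\to\max H$ and no fast contractible orbits. The plan is to reparametrise: pick a regular value $\delta$ of $H$ and a smooth non-decreasing $\phi_\delta:[0,\max H]\to\R$ with $\phi_\delta'\le1$, equal to $0$ on $[0,\delta]$ and constant $=\max\phi_\delta$ on $[\max H-\delta,\max H]$, and set $H_\delta:=\phi_\delta\circ H$. Since $X_{H_\delta}=(\phi_\delta'\circ H)\,X_H$, every periodic orbit of $H_\delta$ on a level $\{H=c\}$ is an orbit of $H$ reparametrised by the factor $\phi_\delta'(c)\le1$, so its period can only grow and $\mathcal{P}_{\le1}(H_\delta;0)=\emptyset$ is inherited. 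The flat collars $\{H<\delta\}$ and $\{H>\max H-\delta\}$ have regular boundary and give condition (5). Finally I would take $P$ a small closed ball inside $\{H_\delta=0\}$ (connected smooth boundary, containing a representative of $[pt]$) and $Q$ a regular sublevel set of a proper fibrewise-radius exhaustion of $M'$ that contains the compact set $\{H_\delta<\max H_\delta\}\subset K$ in its interior; connectedness of the end makes $\partial Q$ connected and $M'\setminus Q\ne\emptyset$ then supplies the second point for $[pt]$. Letting $\delta\to0$ yields the reverse inequality.

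For (ii) I would extend functions from $M'$ to $M$ by a constant. Given a $([pt],[pt])$-admissible $H'$ on $M'$ with $\mathcal{P}_{\le1}(H';0)=\emptyset$ and submanifolds $P',Q'$, the set $Q'$ is compact and disjoint from $S$, so there is a neighbourhood $N$ of $S$ in $M$ with $N\cap Q'=\emptyset$, on which (away from $S$) one has $H'\equiv\max H'$. Hence $H:=H'$ on $M'$ and $H:=\max H'$ on $S$ defines a smooth function on $M$ with $\max H=\max H'$, and it is $([pt],[S])$-admissible: keep $P:=P'$ and $Q:=Q'$, represent $a_+=[S]$ by the section $S$ itself (disjoint from $Q'$, hence in $M\setminus Q$), and note that conditions (1)–(3) and (5) carry over verbatim because $H$ is the constant $\max H$ on all of $N$.

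It remains to check that $H$ has no fast contractible orbits, and this is where the only genuine subtlety lies. Because $H\equiv\max H$ on $N\supset S$, we have $X_H=0$ there, so every non-constant orbit of $H$ lies in $M\setminus N\subset M'$ and is a non-constant orbit of $H'$ of the same period. If such an orbit were contractible in $M$ it would already be contractible in $M'$: the fibre $S^2\setminus\{pt\}$ is simply connected, so $M'$ deformation retracts onto a section $S'\cong\Sigma$ whose inclusion into $M=S^2\times\Sigma$ is itself a section and hence a $\pi_1$-isomorphism, making $\pi_1(M')\to\pi_1(M)$ an isomorphism. A fast contractible orbit of $H$ would therefore be a fast contractible orbit of $H'$, contradicting feasibility of $H'$. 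Thus $H$ is admissible for the right-hand capacity, $\max H'=\max H\le C^{(2o)}_{\mathrm{HZ}}(M,\omega;[pt],[S])$, and taking the supremum over $H'$ proves (ii). The \emph{main obstacle} is precisely this homotopy-class bookkeeping in (ii) — ensuring that contractibility is not lost when $S$ is removed — which is resolved by the computation of $\pi_1(M')$; the enlargement of flat regions and the choice of $P,Q$ with connected boundaries in (i) are handled cleanly by the slope-$\le1$ reparametrisation and the connected end of the disk bundle.
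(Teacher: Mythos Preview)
Your proof is correct and follows the same skeleton as the paper's, but is considerably more self-contained. For (i) the paper simply invokes \cite[Lem.~1.4]{Lu06}, whereas you reprove that lemma directly via the slope-$\le 1$ reparametrisation and an explicit choice of $P,Q$; this is a legitimate and complete substitute (modulo the minor care needed in choosing $\delta$ so that $H$ has no critical values just above $\delta$ and just below $\max H-\delta$, which Sard guarantees). For (ii) both you and the paper extend $H$ by its maximum across $S$ and keep the same $P,Q$; the paper's argument stops at the admissibility inclusion $\HH_a(M';[pt],[pt])\subset\HH_a(M;[pt],[S])$, while you go further and verify the orbit condition by showing $\pi_1(M')\to\pi_1(M)$ is an isomorphism so that contractibility in $M$ forces contractibility in $M'$. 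That last step is genuinely needed for the $C^{(2o)}$ version of the capacity and is glossed over in the paper, so your treatment is in fact more complete on this point.
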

\begin{proof}
\begin{itemize}
    \item[(i)] follows directly from \cite[Lem. 1.4]{Lu06}.
        \item[(ii)] We need to show that  $\HH_a(S^2\times\Sigma\setminus S; [pt], [pt])\subset \HH_a(S^2\times\Sigma; [pt], [S])$. Take $H\in \HH_a(S^2\times\Sigma\setminus S; [pt], [pt])$ then $H$ satisfies all conditions of $\HH_a(S^2\times\Sigma; [pt], [S])$ trivially, except the fourth. But as $ Q\subset \left( S^2\times \Sigma\right)\setminus S$ by the first condition, it follows that $S\subset \left(S^2\times \Sigma\right)\setminus Q$ as well and thus the fourth condition is satisfied.\qedhere
\end{itemize}
\end{proof}
%\todo[inline]{The other inclusion is at least not as easy I think. The non-trivial step is showing that $H\in \HH_a(S^2\times\Sigma; [pt], [S])$ also satisfies $Q\subset S^2\times\Sigma\setminus S$, but condition four only guaranties that there exists a subset $\Tilde S$ homologous to $S$ such that $Q\subset S^2\times \Sigma\setminus\Tilde S$. }

\subsection{Lu's theorem}\label{sec3.3}
In order to state Lu's theorem we need to collect some information about pseudoholomorphic curves (or in particular spheres) in a four-manifold $M$ (in particular $S^2\times\Sigma$) and Gromov--Witten invariants. We will only focus on the special cases we need for our purpose. The main source for this is \cite{Wdl18} and we will also adopt the notation of C. Wendl, i.e. denote by $\MM_{g,m}([A];J)$ the moduli space of unparametrized $J$-holomorphic curves homologous to $[A]\in H_2(M,\Z)$ of genus $g$ and with $m$ marked points. This space admits an evaluation map
$$
\text{ev}:\MM_{g,m}([A];J)\to M^m
$$
that sends a $J$-holomorphic curve to the image of its marked points and can be used to define the constrained moduli space
$$
\MM_{g,m}([A];J;p_1,\ldots, p_m)=\text{ev}^{-1}(p_1,\ldots, p_m)
$$
for a point $(p_1,\ldots,p_m)\in M^m$. One can show \cite[Thm. 2.12]{Wdl18}) that for a generic $\omega$-tame almost complex structure the subset of somewhere-injective curves
$$
\MM_{g,m}^*([A];J)\subset \MM_{g,m}([A];J)
$$
is a smooth finite dimensional manifold. In general $\MM^*_{g,m}([A];J)$ is non-compact, but for generic $J$ the evaluation map
$$
\text{ev}: \MM^*_{g,m}([A];J)\to M^m
$$
defines a pseudo-cycle \cite[Thm. 7.29]{Wdl18}. The Gromov--Witten invariants are then defined via the homomorphism
$$
\text{GW}_{g,m,[A]}^{(M,\omega)}:H^*(M,\Q)^{\otimes m}\to\Z
$$
that is given by the intersection product of pseudo cycles 
$$
\text{GW}_{g,m,[A]}^{(M,\omega)}(\alpha_1,\ldots,\alpha_m):=[\text{ev}]\cdot[\text{PD}(\alpha_1)\times\ldots\times\text{PD}(\alpha_m)],
$$
where $\mathrm{PD}$ denotes the Poincaré dual. Details and a precise definition can be found in \cite[Def. 7.30]{Wdl18}.
G. Lu found a way to estimate the pseudo symplectic capacities of Hofer--Zehnder type in terms of Gromov--Witten invariants. Even though in G. Lu's paper everything is formulated for arbitrary genus, we will only talk about the genus zero case, as that is the only case we will use.
\begin{Definition}
Let $(M,\omega)$ be a closed symplectic manifold and let $\alpha_-,\alpha_+ \in H^*(M;\Q)$. We define
$$
\mathrm{GW}(M,\omega;\alpha_-,\alpha_+) \in (0,\infty]
$$
as the infimum of the $\omega$-areas $\omega([A])$ of the homology classes $[A] \in H_2(M;\Z)$ for which the Gromov--Witten invariant
$\mathrm{GW}^{(M,\omega)}_{A,0,m+2}(\alpha_-,\alpha_+,\beta_1,\ldots,\beta_m) $ is different from zero
for some cohomology classes $\beta_1,\ldots,\beta_m \in H^*(M;\Q)$ and an integer $m \geq 1$. 
\end{Definition}
\noindent
We have used the convention $\inf\emptyset = \infty$. Compactness of the space of stable $J$-holomorphic maps yields positivity of $\text{GW}(M,\omega;\alpha_-,\alpha_+) $.
To find an upper bound for the Hofer--Zehnder capacity we will use the following theorem by G. Lu  \cite[Thm. 1.10]{Lu06}.
\begin{theorem}\label{thm5}
For any closed symplectic manifold $(M,\omega)$ of dimension at least four and nonzero homology classes $\alpha_-,\alpha_+ \in H^*(M;\Q)$,
$$
C^{(2o)}_{\mathrm{HZ}}(M,\omega;\mathrm{PD}(\alpha_-),\mathrm{PD}(\alpha_+)) \leq \mathrm{GW}(M,\omega;\alpha_-,\alpha_+).
$$
\end{theorem}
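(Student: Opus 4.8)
The plan is to prove the statement in contrapositive form at the level of a single Hamiltonian: I fix an $(a_-,a_+)$-admissible function $H$ with $a_\pm=\mathrm{PD}(\alpha_\pm)$, and I show that if $\max H>\mathrm{GW}(M,\omega;\alpha_-,\alpha_+)$, then $H$ must carry a non-constant \emph{contractible} periodic orbit of period at most one, i.e. $\mathcal{P}_{\leq 1}(H;0)\neq\emptyset$. Taking the supremum over all admissible $H$ with $\mathcal{P}_{\leq 1}(H;0)=\emptyset$ then yields the inequality. By the definition of $\mathrm{GW}$ as an infimum of $\omega$-areas, the hypothesis $\max H>\mathrm{GW}(M,\omega;\alpha_-,\alpha_+)$ hands me a class $[A]\in H_2(M;\Z)$ with $\omega([A])<\max H$ and a non-vanishing invariant $\mathrm{GW}^{(M,\omega)}_{A,0,m+2}(\alpha_-,\alpha_+,\beta_1,\dots,\beta_m)\neq 0$ for suitable $\beta_i$ and some $m\geq 1$.

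The geometric heart of the argument, going back to Hofer--Viterbo \cite{HV92} and Liu--Tian \cite{LT00}, is that this non-vanishing invariant forces, for every $\omega$-tame almost complex structure $J$, the existence of a $J$-holomorphic sphere in class $[A]$ whose marked points meet fixed cycle representatives of $a_-\subset\mathrm{Int}(P)$ (where $H=0$), of $a_+\subset M\setminus Q$ (where $H=\max H$), and of the auxiliary classes $\beta_i$. Such a sphere therefore meets both $\{H=0\}$ and $\{H=\max H\}$, and hence sweeps across the shell $Q\setminus\mathrm{Int}(P)$ on which $H$ takes all intermediate values. First I would choose $J$ adapted to $H$: away from the shell $J$ is a fixed generic $\omega$-tame structure, while on the shell $J_R$ is built from $H$ and a neck-stretching parameter $R$, so that $J_R$-holomorphic curves there degenerate, as $R\to\infty$, into Floer-type cylinders for the Hamiltonian flow of $H$. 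The $J$-independence of the Gromov--Witten invariant keeps the constrained moduli space $\MM_{0,m+2}([A];J_R;\dots)$ non-empty for every $R$.

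Next I would run a Gromov--Floer compactness argument as $R\to\infty$. The uniform energy bound $\int_{S^2} u^*\omega=\omega([A])<\max H$ prevents sphere bubbling from absorbing all the energy, and the neck-stretching forces the limit to be a broken configuration containing a non-constant cylinder asymptotic to periodic orbits of $X_H$ on an interior level set. The standard action--energy identity then converts the bound $\omega([A])<\max H$ into a bound on the period, producing a non-constant orbit of period at most one; since it arises as a component of a (bubbled) sphere, it is contractible, so it lies in $\mathcal{P}_{\leq 1}(H;0)$. This is the contradiction, and passing to the supremum gives $C^{(2o)}_{\mathrm{HZ}}(M,\omega;\mathrm{PD}(\alpha_-),\mathrm{PD}(\alpha_+))\leq\mathrm{GW}(M,\omega;\alpha_-,\alpha_+)$, exactly as claimed by Lu \cite{Lu06}.

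I expect the main obstacle to be the compactness-and-degeneration analysis of the third step: one must rule out that the energy simply escapes into sphere or disc bubbles away from the shell, guarantee that the surviving cylindrical component is genuinely non-constant (so that it yields a non-constant orbit and not merely a critical point of $H$), and make the action bookkeeping sharp enough to deliver period $\leq 1$ rather than a merely finite period. A secondary difficulty is transversality: the adapted structure $J_R$ is non-generic and degenerates as $R\to\infty$, so one must argue---through the deformation invariance of $\mathrm{GW}$ under $\omega$-tame almost complex structures---that the constrained moduli spaces stay non-empty even though they need not be regular for $J_R$.
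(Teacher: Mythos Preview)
The paper does not prove this theorem at all: it is stated as a black box and attributed to G.~Lu \cite[Thm.~1.10]{Lu06}, with the underlying mechanism credited to Hofer--Viterbo \cite{HV92} and Liu--Tian \cite{LT00}. There is therefore no ``paper's own proof'' to compare your proposal against; the author simply invokes the result and moves on to apply it.

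That said, your sketch is a reasonable outline of the strategy behind Lu's theorem: the contrapositive reduction to a single admissible $H$, the use of a non-vanishing genus-zero invariant to produce, for every tame $J$, a constrained sphere in class $[A]$ that meets both $\{H=0\}$ and $\{H=\max H\}$, and then a stretching/compactness argument extracting a non-constant contractible periodic orbit with period controlled by $\omega([A])<\max H$. You have also correctly identified the genuine technical difficulties (bubbling control, non-triviality of the limiting cylinder, sharp action--period bookkeeping, and transversality for the non-generic stretched $J_R$). Turning this outline into an actual proof requires the substantial analytic machinery developed in \cite{HV92,LT00,Lu06}; for the purposes of the present paper, citing Lu is the intended and sufficient step.
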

\begin{Remark}\label{rem7}In the proof of Theorem \ref{thm1} we will see that $(D_\lambda\Sigma,\omega_s)$ is actually symplectomorphic to the symplectic manifold $(S^2\times\Sigma\setminus S)$ for a suitable symplectic form $\tilde\omega$ and a section $S$.
In that circumstance, we will use the following estimate based on Lemma \ref{lem1} 
$$
c_{\mathrm{HZ}}^0(S^2\times \Sigma\setminus S,\tilde\omega)=C^{(2o)}_{\mathrm{HZ}}(S^2\times \Sigma\setminus S,\tilde\omega;[pt],[pt])
\leq C_{\mathrm{HZ}}^{(2o)}(S^2\times \Sigma,\tilde\omega,[pt],[S]).
$$
\end{Remark}

\subsection{A non-vanishing Gromov--Witten invariant}\label{sec3.1}
For the proof of Theorem \ref{thm1} we will symplectically embed $(D_\lambda \Sigma,\omega_s)$ into $(S^2\times \Sigma, \Tilde{\omega})$, where $\Tilde{\omega}$ is such that the fibers of $S^2\times\Sigma\to\Sigma$ are symplectic. As explained in the last section we can find an upper bound for the Hofer--Zehnder capacity if certain Gromov--Witten invariants of $(S^2\times \Sigma, \Tilde{\omega})$ do not vanish. This is what we will prove now making use of the following proposition that can be found in \cite[Prop. 2.2]{Wdl18}.
\begin{Proposition}
 Suppose $(M,\omega)$ is a symplectic manifold and $A\subset M$ is a smooth 2-dimensional submanifold. Then $A$ is a symplectic submanifold if and only if there exists an $\omega$-tame almost complex structure $J$ preserving $TA$. \qed
\end{Proposition}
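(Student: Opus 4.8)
My plan is to reduce the statement to a pointwise fact about symplectic vector spaces and then upgrade it to the bundle $TM|_A$ by a polar-decomposition construction, treating the two implications separately.

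\textbf{The easy implication} is the ``if'' direction. Suppose $J$ is $\omega$-tame and satisfies $J(TA)=TA$. Fixing $x\in A$ and a nonzero $v\in T_xA$, I would first note that $Jv\in T_xA$ and that $Jv$ is not a real multiple of $v$ (otherwise $J^2v=-v$ would force a real square root of $-1$), so $\{v,Jv\}$ is a basis of the two-dimensional space $T_xA$. Tameness then gives $\omega_x(v,Jv)>0$, so the alternating form $\omega_x|_{T_xA}$ is nonzero, hence nondegenerate on a two-dimensional space. As $x$ is arbitrary, $A$ is symplectic.

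\textbf{The main implication} is the ``only if'' direction, and here is where the real work lies. Assuming $A$ symplectic, the nondegeneracy of $\omega_x|_{T_xA}$ gives, at every $x\in A$, the symplectic splitting $T_xM=T_xA\oplus (T_xA)^{\omega}$ into symplectic subspaces. First I would choose a Riemannian metric $g$ on $M$ that along $A$ makes this splitting $g$-orthogonal, i.e.\ $T_xA\perp_g (T_xA)^{\omega}$; such a $g$ exists since one may prescribe a bundle metric on $TM|_A$ freely (picking inner products on $TA$ and on $(T_xA)^{\omega}$ and declaring them orthogonal) and then extend it over $M$ by a partition of unity. Given $g$, I would define the automorphism $B$ of $TM$ by $\omega(\,\cdot\,,\,\cdot\,)=g(B\,\cdot\,,\,\cdot\,)$; it is invertible and $g$-skew, so $-B^2=B^{\top}B$ is positive symmetric, and the polar decomposition produces the $\omega$-compatible (hence $\omega$-tame) almost complex structure $J:=B\,(-B^2)^{-1/2}$ on all of $M$.

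The hard part will be verifying that this globally defined $J$ actually preserves $TA$ along $A$. The crux is that $B$ does: for $u\in T_xA$ and any $w\in (T_xA)^{\omega}$ one computes $g(Bu,w)=\omega(u,w)=0$, so $Bu$ is $g$-orthogonal to $(T_xA)^{\omega}$, which by the $g$-orthogonality of the splitting forces $Bu\in T_xA$. Consequently $-B^2$ is block-diagonal with respect to the $g$-orthogonal splitting $T_xA\oplus (T_xA)^{\omega}$ (it is symmetric and preserves $T_xA$, hence preserves its $g$-orthogonal complement too), its positive square root is block-diagonal as well, and therefore $(-B^2)^{-1/2}$ and finally $J=B\,(-B^2)^{-1/2}$ all preserve $T_xA$. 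Smoothness of $J$ is automatic because polar decomposition depends smoothly on $B$, hence on $g$. I expect the only genuine subtlety to be this invariant-subspace bookkeeping for the square root, which is exactly why selecting the adapted metric in the previous step is the decisive move; restricting the resulting $J$ to $A$ then yields the desired tame almost complex structure preserving $TA$.
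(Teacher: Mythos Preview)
Your argument is correct. The ``if'' direction is immediate as you say, and for the ``only if'' direction the polar-decomposition construction with a metric adapted to the symplectic splitting $T_xA\oplus(T_xA)^{\omega}$ along $A$ is exactly the standard route; the block-diagonality check for $B$, $-B^2$, and its positive square root is the right way to see that the resulting compatible (hence tame) $J$ preserves $TA$.

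As for comparison with the paper: there is nothing to compare. The paper does not prove this proposition at all; it merely quotes it from \cite[Prop.~2.2]{Wdl18} and marks it with a \qedsymbol. Your write-up is essentially the proof one finds in standard references (and presumably in Wendl's notes), so you have supplied what the paper deliberately omitted.
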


\noindent
This implies there exists an almost complex structure $J$ on $T(S^2\times\Sigma)$ that is along a fiber $A=S^2\times\lbrace q\rbrace$ of the form
$$
\begin{pmatrix}
J_0 & 0\\
0 & J_1 
\end{pmatrix}: TS^2\oplus T\Sigma\to TS^2\oplus T\Sigma.
$$
Therefore $J_0=J\vert_{A}$ is a complex structure on $A$ as any almost complex structure on a two-dimensional manifold is integrable. Further any two complex structures on the two-sphere are the same up to biholomorphism, i.e. there exists a biholomorphism 
$$
\varphi: (S^2,\iota)\to (A,J_0).
$$
From there it is easy to see that 
$$
u: S^2\to S^2\times \Sigma;\qquad x\mapsto (\varphi(x),q)
$$
is an embedded $J$-holomorphic sphere. In particular, the self-intersection number of $u$ (denoted by $\delta(u)$) vanishes and an application of the adjunction formula (Thm. 2.51 in \cite{Wdl18}) yields
$$
c_1([u])=c_1([A])=[A]\cdot[A]-2\delta(u)+\chi(u)=2.
$$

%\textcolor{red}{The following lemma from \cite{Wdl18} (lemma 1.26) gives some more information. For this we need to restrict to a closed symplectic four manifold $(M,\omega)$ and $A$ a symplectic sphere with $[A]^2=0$.
%\begin{Lemma}
% After a generic perturbation of $J$, the component $\MM_{[A]}(J)$ is a nonempty, smooth, compact, oriented 2-dimensional manifold whose elements are each embedded $J$-holomorphic spheres with pairwise disjoint images, foliating an open subset of $M$. 
%\end{Lemma}
%\noindent
%As $\MM_{[A]}(J)$ is a compact manifold the Gromov--Witten invariant reduces to the intersection product
%$$
%\text{GW}_{[A],0,1}(\text{PD}(q,p))=\text{ev}_*[\MM_{[A]}(J)]\cdot[\text{PD}(q,p)]
%$$}
\noindent
If we now take any somewhere injective $v\in \MM_{0,1}([A];J)$  the adjunction formula tells us that 
$$
\delta(v)=\frac{1}{2}([A]\cdot[A]+\chi(S^2)-c_1([A])=0,
$$
thus all curves in $\MM^*_{0,1}([A];J)$ are embedded. We further observe that the index of any $v\in\MM_{0,1}([A];J)$ is given by
$$
\text{ind}(v)=2 c_1([v])-\chi(v)=2
$$
and we can apply automatic transversality (\cite[Thm. 2.46]{Wdl18}) to find that $\MM^*_{0,1}([A];J)$ is a manifold.\\
Next, we fix $p\in S^2$. Then the embedded $J$-holomorphic sphere $u$ we constructed earlier lies in $\MM^*_{0,1}([A];J; (p,q))$, but on the other hand by positivity of intersections (\cite[Thm. 2.49]{Wdl18}) there can not be a different somewhere-injective curve $v\in\MM_{0,1}([A];J;(p,q))$. 
Therefore no other somewhere-injective $J$-holomorphic sphere of class $[A]$ intersects $u$ and in particular we have found the following corollary.
\begin{Corollary}\label{GW} Let $(p,q)\in S^2\times \Sigma$ and $A=S^2\times\lbrace q\rbrace$ then
$$
  \mathrm{GW}_{[A],0,1}((p,q))=\pm 1\neq 0 
 $$ 
depending on orientations. 
\end{Corollary}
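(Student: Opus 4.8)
The plan is to read the invariant off directly as the signed cardinality of the constrained moduli space, which the preceding paragraphs have already reduced to a single curve. By the definition of the genus-zero one-point invariant,
$$\mathrm{GW}_{[A],0,1}((p,q)) = [\mathrm{ev}]\cdot[(p,q)],$$
the intersection number of the evaluation pseudocycle $\mathrm{ev}\colon\MM^*_{0,1}([A];J)\to M$ with the point cycle $[(p,q)]\in H_0(M)$, where $M=S^2\times\Sigma$; constraining the marked point to $(p,q)$ corresponds to pairing against the top-degree generator of $H^4(M;\Q)$. Concretely this is the signed count of the elements of $\mathrm{ev}^{-1}(p,q)=\MM^*_{0,1}([A];J;(p,q))$, provided this set is compact, zero-dimensional and transversally cut out.

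First I would confirm that the dimensions line up. Since $c_1([A])=2$, the unparametrized moduli space $\MM^*_{0,0}([A];J)$ has dimension $(n-3)(2-2g)+2c_1([A])=(2-3)\cdot 2+2\cdot 2=2$ in the four-manifold $M$ (here $n=2$, $g=0$); adding the single marked point raises this by $2$, so $\dim\MM^*_{0,1}([A];J)=4=\dim M$ and $\mathrm{ev}^{-1}(p,q)$ is generically zero-dimensional. The automatic transversality already invoked to give $\MM^*_{0,1}([A];J)$ a manifold structure makes $\mathrm{ev}$ a submersion at $u$, so the fibre point $u$ is transverse and carries a well-defined sign. The set-theoretic identification of the fibre is exactly the content of the discussion above: the embedded sphere $u$ lies in $\MM^*_{0,1}([A];J;(p,q))$, and positivity of intersections rules out any second somewhere-injective curve of class $[A]$ through $(p,q)$, whence $\MM^*_{0,1}([A];J;(p,q))=\{u\}$.

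The remaining, and in my view only delicate, point is to guarantee that this naive count agrees with the honest Gromov--Witten number, i.e. that nothing is lost to the boundary of the Gromov compactification (nodal or multiply covered configurations). Here I would exploit that $[A]=[S^2\times\{q\}]$ is primitive: in $H_2(S^2\times\Sigma;\Z)=\Z[S^2\times\mathrm{pt}]\oplus\Z[\mathrm{pt}\times\Sigma]$ it cannot be written as a sum of two effective classes each supporting a nonconstant $\tilde\omega$-holomorphic sphere of positive area, and a $k$-fold cover would force $[A]=k[A']$ with $k\geq 2$, impossible for a primitive class. Hence $\overline{\MM}_{0,1}([A];J;(p,q))=\MM^*_{0,1}([A];J;(p,q))=\{u\}$ is already compact, the evaluation pseudocycle has empty limit set over $(p,q)$, and the intersection number reduces to the single sign $\pm1$.

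Assembling these, $\mathrm{GW}_{[A],0,1}((p,q))=[\mathrm{ev}]\cdot[(p,q)]=\pm1$, with the sign fixed once an orientation of the moduli space is chosen, which proves the corollary. I expect the genuinely substantive work to be the compactness/no-bubbling argument of the last paragraph, since the transversality, embeddedness and uniqueness of $u$ have all been secured earlier; everything else is bookkeeping with the pseudocycle intersection product.
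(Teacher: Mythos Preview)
Your proof is correct and follows the same route as the paper: the corollary is read off directly from the fact, established in the preceding paragraphs, that the constrained moduli space $\MM^*_{0,1}([A];J;(p,q))$ consists of the single embedded fiber $u$. You supply more detail than the paper does---the explicit dimension count and, in particular, the primitivity argument ruling out multiply covered and nodal degenerations---which the paper leaves entirely implicit in its appeal to the pseudocycle machinery.
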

\noindent
Let us mention a consequence of Corollary \ref{GW} that will be used later. If $[S]$ is the homology class of some section $S$ of the trivial bundle $M=S^2\times\Sigma\to\Sigma$ we find with the help of \cite[Exercise 7.17]{Wdl18} 
\begin{equation}\label{eq5}
\text{GW}_{[A],0,3}((p,q),\text{PD}[S],\text{PD}[S])=\left([A]\cdot [S]\right)^2\text{GW}_{[A],0,1}(\text{PD}(p,q))\neq 0.
\end{equation}

\begin{Remark}\label{rem1}
By Theorem \ref{thm5} we have for any fiber $A$ of $S^2\times\Sigma\to\Sigma$,
$$
C_{\mathrm{HZ}}^{(2o)}(S^2\times \Sigma,\tilde\omega;[pt],[S])\leq \mathrm{GW}(S^2\times \Sigma,\tilde\omega;\mathrm{PD}(pt),\mathrm{PD}(S))
\leq \tilde\omega([A])
$$
as the Gromov--Witten invariant \eqref{eq5} does not vanish and the fibers are symplectic. Taking Remark \ref{rem7} into account, we conclude that if $\tilde\omega$ is a symplectic form on $S^2\times \Sigma$ such that the fibers are symplectic, we find an upper bound of the Hofer--Zehnder capacity of $(S^2\times\Sigma\setminus S,\tilde \omega)$ by the $\tilde\omega$-area of the fibers. 
\end{Remark}
\section{Proof of Theorem 1}\label{sec4}
We find a lower bound for the Hofer--Zehnder capacity by constructing an explicit admissible Hamiltonian and show that the lower bound is also an upper bound by using Lu's theorem.

\subsection{Lower bounds}
We will use the kinetic Hamiltonian $E(x,v)=\frac{1}{2}\vert v\vert^2 $ to find a lower bound of the Hofer--Zehnder capacity of $(D_\lambda\Sigma,\omega_s)$. We need to look for periodic solutions $\gamma(t)=(x(t),v(t))$ to 
\begin{equation}\label{e3}
    \dot\gamma=X_E=X+sV.
\end{equation}
Applying $\dd\pi$ to this equation yields
$$
\dot x=\dd\pi\dot\gamma\overset{\eqref{e3}}{=}\dd\pi (X+sV)=\dd\pi\LL^H(v)=v.
$$
Thus our solutions must be of the form $\gamma(t)=(x(t),\dot x(t))$.\\
On the other hand applying the projection $\mathcal{P}: TT\Sigma\to\mathcal{V}$ on the vertical bundle yields 
$$
\nabla_{\dot x}\dot x=\mathcal{P}(\dot\gamma)\overset{\eqref{e3}}{=}\mathcal{P}(X+sV)=s\iota v.
$$
This means the projection to $M$ of solutions are curves of geodesic curvature $\kappa_g=\frac{s}{\vert v\vert}$. If $R$ denotes the radius (with respect to the Riemannian metric $g$) of a geodesic circle we know using normal polar coordinates that its circumference $C$ and the geodesic curvature $\kappa_g$ are
\begin{align*}
   C&=\frac{2\pi}{\sqrt{\kappa}}\sin(\sqrt{\kappa}R)=\frac{2\pi\sqrt{\kappa}^{-1}\tan(\sqrt{\kappa}R)}{\sqrt{1+(\tan(\sqrt{\kappa}R))^2}},\\ 
   \kappa_g&=\frac{\sqrt{\kappa}}{\tan(\sqrt{\kappa}R)}.
\end{align*}
Inserting $\kappa_g$ into $C$ yields
$$
C=\frac{2\pi}{\kappa_g\sqrt{1+\kappa/\kappa_g^2}}=\frac{2\pi\vert v\vert}{\sqrt{s^2+\kappa\vert v\vert^2}},
$$
where in the last step we inserted $\kappa_g=s/\vert v\vert$. Now, we conclude that the period is given by
$$
T=\frac{C}{\vert v\vert}=\frac{2\pi}{\sqrt{s^2+\kappa\vert v\vert^2}}.
$$
We can find a function $h:\R\to\R$ such that all solutions of the Hamiltonian system belonging to $H=h\circ E$ have period one. The periods $T_E, T_H$ belonging to the Hamiltonians $E,H$ are related via
$$
 T_H=\frac{T_E}{h'(E)}
$$
and therefore if we set
$$
H(E)=\left\{\begin{array}{ll} \frac{2\pi}{\kappa}\sqrt{s^2+2\kappa E}, & \text{for}\ \kappa\neq 0, \\
         \frac{2\pi E}{ s}, & \text{for}\ \kappa=0.\end{array}\right.
$$
all solutions for $X_H$ have period one. We have now found a nice Hamiltonian but to find a lower bound of Hofer--Zehnder capacity we need to modify $H$ such that $H$ becomes admissible. This can be done with the help of a function
$f:[a,b]\to [0,\infty)$ satisfying
$$
\begin{aligned}
&0\leq f'(x)< 1, \\
&f(x)=0\ \  \text{near}\ \ a,\\
&f(x)=b-a-\varepsilon\ \  \text{near}\ \ b
\end{aligned}
$$
with $a=\min H$ and $b=\max H$. Then all solutions to the Hamiltonian system with Hamiltonian $\tilde H=f\circ g\circ E$ have period 
$$
T=\frac{1}{f'(g(E))}> 1.
$$
Thus $\tilde H$ is admissible and we find the estimate
$$
c_{\mathrm{HZ}}(D_\lambda\Sigma,\omega_s)\geq b-a=\left\{\begin{array}{ll} \frac{2\pi}{\kappa}\left( \sqrt{s^2+\kappa\lambda^2}-s\right) & \text{for}\ \kappa\neq 0, \\
         \frac{\pi\lambda^2}{ s} & \text{for}\ \kappa=0.\end{array}\right.
$$

\subsection{Upper bound}
The idea is to use Lu's theorem. We first find a symplectomorphism
$$
F:(D_0 ^\lambda\Sigma,\omega_s)\to (D_a^b\Sigma, \dd((E+s/\kappa)\tau))
$$
for some $a,b>0$ depending on $s,\lambda,\kappa$, where $D_{a}^b\Sigma=\lbrace (x,v)\in T\Sigma\vert\ a<\vert v\vert<b\rbrace $. Observe that 
$$
\dd ((E+s/\kappa)\tau)=\dd E\wedge \tau-(E\kappa+s)\pi^*\sigma
$$
makes the fibers symplectic due to the first summand. The next step is to fiberwise compactify $(D_a^b\Sigma, \dd((E+s/\kappa)\tau))$. We will find that this yields the trivial sphere bundle $S^2\times \Sigma$ with a symplectic structure $\Tilde{\omega}$ that is induced by $\dd((E+s/\kappa)\tau)$ and therefore makes the fibers symplectic. As discussed in Section 3 this means that the symplectic area of these fibers yields an upper bound to the Hofer--Zehnder capacity.

\subsubsection{Symplectization of the fibers}
Finding $F$ works as follows. Recall that for $\kappa\neq 0$ 
$$
\omega_s=\dd\alpha-s\pi^*\sigma=\dd\left(\alpha+\frac{s}{\kappa}\tau\right)
$$
on $D_0^\lambda\Sigma$ and that the $\alpha$ term makes the fibers Lagrangian instead of symplectic. Further, the trajectories of $E$ are geodesic circles. By mapping a point $\gamma(t)$ on a trajectory to the circle center, we eliminate the horizontal movement of the trajectories and therefore the $\dd\alpha$ part of $\omega_s$. More precisely this is done by taking the flow $\phi_b$ of $H$, which as shown in Lemma \ref{lem4} couples $\alpha$ and $\tau$. 
We make the ansatz $F_s=m_{a_s(r)}\circ \phi_{b_s(r)}$, where $r=r(x,v)=\vert v\vert$ and $m_a$ scales the fibers as defined in Definition \ref{def}. The following calculation in the case of $\kappa=1$ was done in \cite[Thm. A1]{BR19} and we only adapt it to arbitrary $\kappa$. Observe that 
\begin{equation}\label{eq6}
   \dd F_s=\frac{1}{b_s(r)}Y\otimes\dd(a_s(r))-a_s(r)\dd m_{a_s(r)}\cdot H\otimes \dd (b_s(r))+\dd m_{a_s(r)}\dd\phi_{b_s(r)} 
\end{equation}
and $\dd m_a\cdot H= a\cdot H$, thus $\tau$ vanishes on the first two terms. By Lemma \ref{lem4} and \ref{lem3}
\begin{align*}
    F_s^*\left (\frac{r^2}{2}+\frac{s}{\kappa}\right )\tau&=\left ( \frac{a_s(r)^2r^2}{2}+\frac{s}{\kappa}\right )\left ( -\frac{\sqrt{\kappa}}{r}\sin(r\sqrt{\kappa}b_s(r))\alpha+\cos(r\sqrt{\kappa}b_s(r))\tau\right )\\
    &\stackrel{!}{=}\alpha+\frac{s}{\kappa}\tau.
\end{align*}
We read off
\begin{align*}\label{e1}
    \left ( \frac{a_s(r)^2r^2}{2}+\frac{s}{\kappa}\right )\sin(r\sqrt{\kappa}b_s(r))&=\frac{-r}{\sqrt{\kappa}},\\
    \left ( \frac{a_s(r)^2r^2}{2}+\frac{s}{\kappa}\right )\cos(r\sqrt{\kappa}b_s(r))&=\frac{s}{\kappa},
\end{align*}\label{eq1}
which is solved by
\begin{align}
    b_s(r)=\frac{1}{r\sqrt{\kappa}}\textrm{arctan}\left(-\frac{r\sqrt{\kappa}}{s}\right),\qquad
    a_s(r)=\frac{1}{r}\sqrt{2\left (\sqrt{\frac{r^2}{\kappa}+\frac{s^2}{\kappa^2}}-\frac{s}{\kappa}\right )}.
\end{align}
Observe that $F_s$ is a symplectomorphism onto its image as $(ra_s(r))'\neq 0$ for all $0<r<\lambda$. The image of $F_s$ is $D_a^b\Sigma$, where for $\kappa>0$ we have
\begin{equation}\label{eq10}
    a=\lim_{r\to 0}(ra_s(r))=0,\qquad b=\lim_{r\to \lambda}(ra_s(r))=\sqrt{2\left(\sqrt{\frac{\lambda^2}{\kappa}+\frac{s^2}{\kappa^2}}-\frac{s}{\kappa}\right)}
\end{equation}
whereas for $\kappa<0$ we have
\begin{equation}\label{eq11}
    a=\lim_{r\to \lambda}(ra_s(r))=\sqrt{2\left(\sqrt{\frac{\lambda^2}{\kappa}+\frac{s^2}{\kappa^2}}-\frac{s}{\kappa}\right)},\qquad b=\lim_{r\to 0}(ra_s(r))=2\sqrt{-\frac{s}{\kappa}}.
\end{equation}

%\paragraph{Sphere}\ \\
%The case of the sphere i.e. $\kappa>0$ was discussed by Benedetti and Ritter in \cite[thm. A1]{BR19}, and they found that
%$$
%F_s: (D_0^ \lambda S^2,\omega_s)\to (D_0^{\sqrt{2(\sqrt{\lambda^2/\kappa+s^2/\kappa}-s/\kappa)}}S^2, \dd((E+s)\tau))
%$$
%is a symplectomorphism. 
%\paragraph{Hyperbolic surfaces}\ \\
%The case $\kappa<0$ and $\sqrt{-\kappa}\lambda < s$ works very similar. 
%\begin{Proposition}
%If $\kappa<0$ and $\sqrt{-\kappa}\lambda < s$, there is a symplectomorphism 
%$$
%F: (D^\lambda_0\Sigma,\omega_s)\mapsto(D_{\sqrt{2(\sqrt{s^2/\kappa^2+\lambda^2/\kappa}-s/\kappa)}}^{2\sqrt{-s/\kappa}}\Sigma, \dd (E+s/\kappa)\tau),
%$$
%where $D_{a}^b\Sigma=\lbrace (x,v)\in T\Sigma\vert\ a<\vert v\vert<b\rbrace $.
%\end{Proposition}

%\begin{proof}
%The proof is completely analogous to the proof of theorem A.1. in \cite{BR19}. To simplify the calculations we assume $\kappa=-1$, the general case works analogous. The functions $a_s, b_s$ of $r=\vert v\vert$ are then given by
%\begin{align*}
%    a_s(r)&=\frac{1}{r}\sqrt{2(\sqrt{s^2-r^2}+s)}=\sqrt{\frac{2}{s-\sqrt{s^2-r^2}}}\\
%    b_s(r)&=\frac{1}{r}\tanh^{-1}\left (\frac{r}{s}\right )=\frac{1}{2r}\ln\left( \frac{s+r}{s-r}\right ).
%\end{align*}
%We observe that 
%$$
 %   a_s: (0,s)\to (\sqrt{2/s},\infty )
%$$
%is strictly monotone decreasing and
%$$
%    b_s:[0,s)\to [1/s,\infty) 
%$$
%is strictly monotone increasing. Therefore $F$ is well defined on $D_\lambda^0\Sigma$ for all $\lambda\leq s$ and a diffeomorphism.
%\end{proof}

%\paragraph{Torus}\ \\
\noindent
For $\kappa= 0$ the calculations in the beginning of this section do not hold, but the geometric idea still works as long as $s\neq 0$. Approximation of the equations \eqref{e1} for small $\kappa$ yields
\begin{align*}
    &\left(\frac{a_s(r)^2r^2}{2}+\frac{s}{\kappa}\right)\left(r\sqrt{\kappa} b_s(r)+\mathcal{O}(\sqrt{\kappa}^3) \right)=\frac{-r}{\sqrt{\kappa}},\\
    &\left(\frac{a_s(r)^2r^2}{2}+\frac{s}{\kappa}\right)\left( 1-\frac{r^2\kappa b_s(r)^2}{2}+\mathcal{O}(\sqrt{\kappa}^3)\right)=\frac{s}{\kappa}.
\end{align*}
Taking the limit $\kappa\to 0$ in the second equation now yields
\begin{align*}
    sb_s(r)&=-1,\\
    a_s(r)^2&=sb_s(r)^2.
\end{align*}
thus $a_s(r)=\frac{1}{\sqrt{s}}$ and $b_s(r)=-\frac{1}{s}$. On the torus the coordinate description of  $F_s=m_{1/\sqrt{s}}\circ\phi_{-1/s}$ is simply given by
$$
F_s:D_\lambda \T^2\to D_{\lambda/\sqrt{s}}\T^2;\ \left( x,y,\frac{v_x}{s},\frac{v_y}{s}\right)\mapsto\left(x-\frac{v_y}{s},y+\frac{v_x}{s},\frac{v_x}{\sqrt{s}},\frac{v_y}{\sqrt{s}}\right)
$$
and 
\begin{align*}
    F_s^*(dE\wedge\tau-s\sigma)&=F_s^*\lbrack \frac{1}{2E} (v_x\dd v_x+ v_y\dd v_y)\wedge (-v_y\dd v_x+v_x\dd v_y)-s\dd x\wedge\dd y\rbrack\\
    &=F_s^*(\dd v_x\wedge\dd v_y-s\dd x\wedge\dd y)\\
    &=\dd v_x\wedge\dd x+ \dd v_y\wedge \dd y-s\dd x\wedge\dd y\\
    &=\dd\alpha-s\pi^*\sigma.
\end{align*}
Thus, $F_s$ is also a symplectomorphism in the case of $\kappa=0$. This finishes the proof of the following lemma by Ginzburg \cite[Lem. 5.3]{Gb96}.
\begin{Lemma}
We can identify
$$
(D_\lambda\T^2,\omega_s)\cong(D_{\lambda/\sqrt{s}}\times\T^2,\dd E\wedge \tau-s\pi^*\sigma).
$$
whenever the magnetic field does not vanish, i.e. $s\neq 0$.\qed
\end{Lemma}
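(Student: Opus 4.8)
The plan is to construct an explicit symplectomorphism of exactly the type used for $\kappa\neq0$, namely a composition $F_s=m_{a_s}\circ\phi_{b_s}$ of a fiber rescaling with the flow of the horizontal field, but to exploit that on a flat torus both pieces degenerate to elementary maps that can be written in closed form. First I would rewrite the target form in the standard global coordinates $(x,y,v_x,v_y)$ on $T\T^2$. Since the Levi--Civita connection of the flat metric is trivial, the angular form is $\tau=\frac{1}{2E}(-v_y\,\dd v_x+v_x\,\dd v_y)$, and a one-line wedge computation gives $\dd E\wedge\tau=\dd v_x\wedge\dd v_y$. Hence the target symplectic form is literally $\dd v_x\wedge\dd v_y-s\,\dd x\wedge\dd y$, which is manifestly nondegenerate and restricts to an area form on each fiber.

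Second, I would pin down the two scaling parameters. The closed-form solutions $a_s(r),b_s(r)$ from the curved case carry factors of $1/\kappa$ and are singular as $\kappa\to0$, so rather than substitute $\kappa=0$ I would expand the two defining equations for $a_s,b_s$ to first order in $\kappa$ and pass to the limit. The surviving relations are $s\,b_s=-1$ and $a_s^2=s\,b_s^2$, which force the constant values $b_s=-1/s$ and $a_s=1/\sqrt{s}$, both independent of $r$. This is exactly where the hypotheses enter: one needs $s\neq0$ for these to make sense, and (after the reduction to $s\geq0$ explained in Section \ref{sec2}) $s>0$ so that $\sqrt{s}$ is real.

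Third, with the parameters fixed the map is $F_s=m_{1/\sqrt{s}}\circ\phi_{-1/s}$, and geometrically it sends each point to the center of its magnetic orbit and then rescales the fiber. Because geodesics on the flat torus are straight lines, the flow of the horizontal field $H$ displaces the base point by a fixed multiple of $\iota v$ while leaving $v$ unchanged (so Lemma \ref{lem4} is replaced by an elementary straight-line flow), and $m_{1/\sqrt{s}}$ scales the fiber vector by $1/\sqrt{s}$ (Lemma \ref{lem3}); composing gives the explicit coordinate formula above. I would then close the argument by the direct pullback check $F_s^*(\dd v_x\wedge\dd v_y-s\,\dd x\wedge\dd y)=\dd\alpha-s\,\pi^*\sigma=\omega_s$, which reduces to a short substitution once $F_s$ is written in coordinates, together with the radius bookkeeping: the horizontal flow preserves $|v|$ while $m_{1/\sqrt{s}}$ contracts it by $1/\sqrt{s}$, so $D_\lambda\T^2$ is carried diffeomorphically onto $D_{\lambda/\sqrt{s}}\T^2\cong D_{\lambda/\sqrt{s}}\times\T^2$.

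The step I expect to be the genuine obstacle is the second one: the general construction does not specialize to $\kappa=0$ by mere substitution, since the expressions blow up, so one must justify the limiting values $a_s=1/\sqrt{s}$ and $b_s=-1/s$ on their own terms and then confirm, via the explicit pullback, that the resulting elementary map is an honest symplectomorphism and not merely a formal limit of the curved-case formulas.
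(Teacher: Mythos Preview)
Your proposal is correct and follows essentially the same approach as the paper: obtain the constants $a_s=1/\sqrt{s}$ and $b_s=-1/s$ by expanding the $\kappa\neq0$ defining equations and passing to the limit $\kappa\to0$, write $F_s=m_{1/\sqrt{s}}\circ\phi_{-1/s}$ explicitly in the flat coordinates $(x,y,v_x,v_y)$, rewrite the target form as $\dd v_x\wedge\dd v_y-s\,\dd x\wedge\dd y$ using $\dd E\wedge\tau=\dd v_x\wedge\dd v_y$, and then verify $F_s^*(\dd v_x\wedge\dd v_y-s\,\dd x\wedge\dd y)=\dd\alpha-s\pi^*\sigma$ by direct substitution. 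Your identification of the genuine obstacle---that the curved formulas blow up at $\kappa=0$ so the limit only suggests the candidate map, whose symplecticity must then be checked independently---matches the paper's logic exactly.
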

\subsubsection{Compactification}
We want to compactify $(D_a^b\Sigma,\tilde\omega:= \dd E\wedge\tau-(\kappa E+s)\pi^*\sigma)$ fiberwise. 
All closed surfaces $\Sigma$ can be embedded into $\R^3$. Thus adding the normal bundle to the tangent bundle yields the trivial bundle $\R^3\times\Sigma$. We conclude that the 2-point compactification, which can be seen as the 2-sphere subbundle of $\R^3\times \Sigma$ is trivial as well.  

\noindent
We will now make the compactification of $D_a^b\Sigma$ more explicit to show that (i) the symplectic form $\tilde\omega$ extends to a symplectic form on $S^2\times\Sigma$ and $(ii)$ the map $F_s$ extends symplecticly to the zero-section. Consider first the map 
$$
\Upsilon: D^b_a\Sigma\to D_0^{\sqrt{b^2-a^2}}\Sigma;\qquad (x,v)\mapsto \left(x,\frac{\sqrt{r^2-a^2}}{r}v\right).
$$
It is enough to compactify $D^\rho_0\Sigma$, where $\rho=\sqrt{b^2-a^2}$. The lower boundary can be compactified by the inclusion $i:D_0^\rho\Sigma\hookrightarrow D_\rho\Sigma$, the upper boundary using the composition $j\circ i$, where 
$$
j: D^\rho_0\Sigma\to D^\rho_0\Sigma;\qquad (x,v)\mapsto \left( x,\frac{\sqrt{\rho^2-r^2}}{r}v\right)
$$
flips the boundaries. All in all the compactification is given by the maps 
$$
i\circ\Upsilon: D_a^b\Sigma\to D_{\sqrt{b^2-a^2}}\Sigma \qquad \text{and}\qquad i\circ j\circ\Upsilon: D_a^b\Sigma\to D_{\sqrt{b^2-a^2}}\Sigma. 
$$
The pushforward of $\tilde\omega=\dd E\wedge\tau-(\kappa E+s)\pi^*\sigma$ is respectively given by
$$
r\dd r\wedge \tau -\left(\frac{\kappa}{2}(a^2+r^2)+s\right)\pi^*\sigma\qquad\text{and}\qquad -r\dd r\wedge \tau -\left(\frac{\kappa}{2}(b^2-r^2)+s\right)\pi^*\sigma.
$$
If these forms extend symplecticly to $r=0$, the symplectic form $\tilde\omega$ extends to a symplectic form on $S^2\times\Sigma$. This is the case if and only if $\frac{\kappa}{2}a^2+s\neq 0$ and $\frac{\kappa}{2}b^2+s\neq 0$ and indeed plugging in equations \eqref{eq10} and \eqref{eq11} yields
$$
\frac{\kappa}{2}a^2+s=\left\{\begin{array}{ll} s & \text{for}\ \kappa\geq 0 \\
         \sqrt{\kappa\lambda^2+s^2} & \text{for}\ \kappa<0\end{array}\right. ,\qquad
\frac{\kappa}{2}b^2+s=\left\{\begin{array}{ll} \sqrt{\kappa\lambda^2+s^2} & \text{for}\ \kappa\geq 0 \\
         -s & \text{for}\ \kappa<0.\end{array}\right.
$$
We denote the extension of the symplectic form to $S^2\times\Sigma$ also by $\tilde\omega$.
\begin{Lemma}
The symplectic embedding 
$$ \ (D_0^\lambda\Sigma,\omega_s)\overset{F_s}{\longrightarrow} (D_a^b\Sigma,\dd E\wedge\tau-(E\kappa+s)\pi^*\sigma)\hookrightarrow(S^2\times\Sigma,\tilde\omega)$$
also denoted by $F_s$ extends smoothly to the zero-section.
\end{Lemma}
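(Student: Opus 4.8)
The plan is to exhibit the extended map as a composition of pieces each of which is \emph{manifestly} smooth across the zero section, so that no delicate limit argument is required. Two observations make this work. First, although the frame $(Y,V,X,H)$ degenerates along the zero section, each of its members — in particular $-H$ — extends to a genuinely smooth vector field on all of $T\Sigma$ that simply vanishes there, since $H_{(x,v)}=\LL^\HH_{(x,v)}(\iota_x v)$ is smooth and linear in $v$. Second, the scaling factor $a_s$ and the flow time $b_s$ are not merely functions of $|v|$ but smooth functions of $|v|^2$.

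For this second point I would rewrite $b_s(r)=\tfrac{1}{r\sqrt\kappa}\arctan(-r\sqrt\kappa/s)=-\tfrac1s\,\tfrac{\arctan t}{t}$ with $t=r\sqrt\kappa/s$, and use that $\arctan(t)/t=1-t^2/3+\dots$ is an \emph{even} power series, so $b_s$ is smooth in $t^2=\kappa r^2/s^2$, hence in $r^2=|v|^2$; for $\kappa<0$ the same computation produces $-\tfrac1s\,\mathrm{arctanh}(\cdot)/(\cdot)$, again even, and the strong-field hypothesis $|s|>\sqrt{-\kappa}\,\lambda$ keeps its argument below $1$ on all of $D_\lambda\Sigma$. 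Similarly $R(r)^2:=(r a_s(r))^2=2\big(\sqrt{r^2/\kappa+s^2/\kappa^2}-s/\kappa\big)$ is smooth in $r^2$ and vanishes to first order, $R(r)^2=|v|^2/s+O(|v|^4)$. Consequently $\phi_{b_s}$ is the time-$b_s(|v|^2)$ flow of the smooth vector field $-H$, hence smooth in $(x,v)$, and since $\dd E(H)=0$ it preserves $E$, hence $|v|$, and fixes the zero section.

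I would then assemble the composite. The maps $m_a,\Upsilon,i,j$ all act as radial fibre scalings $(x,w)\mapsto(x,c\,w)$, so they fix base points and preserve fibre directions; only $\phi_{b_s}$ moves the base and rotates the fibre. Writing $w(x,v)$ for the smooth fibre component of $\phi_{b_s(|v|^2)}(x,v)$, which satisfies $|w|=|v|$, the base point of the composite is $\pi\circ\phi_{b_s(|v|^2)}(x,v)$, smooth and equal to $x$ on the zero section, while its fibre component is $\sqrt{\Psi(|v|^2)/|v|^2}\;w(x,v)$, where $\Psi(|v|^2)$ denotes the final squared fibre norm: $\Psi=R^2$ for $\kappa>0$ (the lower end, compactified by $i\circ\Upsilon=i$) and $\Psi=b^2-R^2$ for $\kappa<0$ (the upper end, compactified by $i\circ j\circ\Upsilon$). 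In both cases $\Psi$ is smooth in $|v|^2$ with $\Psi(0)=0$ and $\Psi'(0)=1/s$, so $\Psi(|v|^2)/|v|^2$ is smooth and positive near $0$; hence the fibre component is a smooth scalar times the smooth field $w$, therefore smooth, and it vanishes on the zero section. This shows the extension is smooth and carries the domain zero section onto the pole section $S$; smoothness on the dense open part then forces $F_s^*\tilde\omega=\omega_s$ to persist by continuity, so the extension is symplectic. The torus case is immediate, since there $a_s,b_s$ are constants and $F_s$ is the explicit affine map written out above.

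The one place that genuinely needs care — and the reason a naive fibrewise limit fails — is that for $\kappa<0$ the isolated scaling $m_{a_s}$ degenerates, with $a_s(r)\to\infty$ as $r\to0$ even though $r a_s(r)\to b<\infty$. One must therefore never single out $m_{a_s}$, but bundle the entire chain of radial scalings into the one factor $\sqrt{\Psi(|v|^2)/|v|^2}$, whose smoothness rests precisely on $\Psi$ being even in $|v|$ and vanishing to exactly first order. The second, more conceptual, obstacle is recognising that $-H$ extends smoothly although the frame containing it does not; once these two points are secured the remainder is routine bookkeeping.
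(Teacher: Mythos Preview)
Your argument is correct and follows the same route as the paper: for $\kappa>0$ one uses $i\circ F_s$ directly, while for $\kappa<0$ one must compose with $j\circ\Upsilon$ first and observe that the \emph{combined} radial scaling (your $\sqrt{\Psi(|v|^2)/|v|^2}$, the paper's $\tilde a_s$) is finite and smooth at $r=0$, even though $a_s$ alone blows up. The paper simply asserts that $a_s$, $\tilde a_s$, and $b_s$ ``extend smoothly to the zero-section''; you supply the missing justification --- evenness in $r$ (hence smoothness in $|v|^2=2E$) and the global smoothness of $-H$ despite the frame degenerating --- which is exactly what is needed to turn the paper's claim into a proof.
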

\begin{proof}
If $\kappa\geq 0$, then $a=0$, $\Upsilon=\text{id}$ and we need to extend $i\circ F_s: D_0^\lambda\Sigma\to D_b\Sigma$ to the zero section. This is possible as $a_s$ and $b_s$ extend smoothly to the zero-section.\\
If $\kappa<0$, then we need to extend
$$
\tilde F=i\circ j\circ \Upsilon\circ F_s: D_0^\lambda\Sigma\to D^{\sqrt{b^2-a^2}}\Sigma
$$
to the zero section. We have $\tilde F= m_{\tilde a_s}\circ \phi_{b_s}$, where
$$
\tilde a_s(r)=\frac{1}{r}\sqrt{b^2-a_s(r)^2r^2}=\frac{1}{r}\sqrt{-2\left(\frac{s}{\kappa}+\sqrt{\frac{r^2}{\kappa}+\frac{s^2}{\kappa^2}}\right)}.
$$
We see that as in the case $\kappa\geq 0$ the function $\tilde a_s$ extends smoothly to the zero-section.
\end{proof}
\subsubsection{Application of Lu's theorem}

We find ourself in the setup described in Section \ref{sec:3}, having a symplectic embedding
$$
F_s: (D_\lambda\Sigma,\omega_s)\hookrightarrow (S^2\times\Sigma,\tilde\omega).
$$
The map $F_s$ hits everything but a section $\mathcal{S}:\Sigma\to S^2\times\Sigma$ which can be identified with the section antipodal to the image of the zero-section under $F_s$ and we conclude
$$
c_{\mathrm{HZ}}^0(D_\lambda \Sigma,\omega_s)= c_{\mathrm{HZ}}^0(S^2\times\Sigma\setminus \mathcal{S},\tilde\omega).
$$
As derived in Remark \ref{rem7} and Remark \ref{rem1} an upper bound to the Hofer--Zehnder capacity is given by the symplectic area of a fiber, i.e. let $A\subset S^2\times\Sigma$ be a fiber, then 
$$
c_{\mathrm{HZ}}^0(D_\lambda \Sigma,\omega_s)\leq\tilde\omega(A).
$$
Thus all that is left to do now is to compute this symplectic area
$$
\omega(A)=\int_0^{2\pi}\dd\varphi\int_a^b r\dd r=\left\{\begin{array}{ll} \frac{2\pi}{\kappa}\left( \sqrt{s^2+\kappa\lambda^2}-s\right), & \text{for}\ \kappa\neq 0, \\
         \frac{\pi\lambda^2}{s}, & \text{for}\ \kappa=0.\end{array}\right.
$$
which is the same as the lower bound we computed in Section 4.1 and therefore we conclude
$$
c_{\mathrm{HZ}}(D_\lambda \Sigma,\omega_s)=c_{\mathrm{HZ}}^0(D_\lambda\Sigma,\omega_s)=\left\{\begin{array}{ll} \frac{2\pi}{\kappa}\left( \sqrt{s^2+\kappa\lambda^2}-s\right), & \text{for}\ \kappa\neq 0, \\
         \frac{\pi\lambda^2}{s}, & \text{for}\ \kappa=0.\end{array}\right.
$$

\section{Proof of Theorem 3}\label{sec5}
To ease the notation we only consider the case of a hyperbolic surface of constant curvature $\kappa=-1$. For weak magnetic fields, i.e. $s<\lambda$, we can calculate the Hofer--Zehnder capacity relative to $D_s\Sigma$. The value of the relative capacity was given in Ginzburg \cite[Ex. 4.7]{Gb04} together with an idea of the proof. Applying the ideas of the present paper we can fill in the details. 
\begin{theorem}
If $s<\lambda$, there is a symplectomorphism
$$
F: (D_s^\lambda\Sigma,\omega_s)\to \left(D_0^{\sqrt{\lambda^2-s^2}}\Sigma,\omega_0\right).
$$
\end{theorem}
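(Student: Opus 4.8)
The plan is to rerun the ``symplectization of fibers'' construction from the proof of Theorem~\ref{thm1}, but tuned so as to \emph{completely} untwist $\omega_s$ rather than merely make the fibers symplectic. For $\kappa=-1$ the relation \eqref{rel} reads $\pi^*\sigma=\dd\tau$, so on $D_s^\lambda\Sigma$ one may write $\omega_s=\dd\alpha-s\pi^*\sigma=\dd(\alpha-s\tau)$. Since $\omega_0=\dd\alpha$, it therefore suffices to produce a diffeomorphism $F$ with $F^*\alpha=\alpha-s\tau$, as then $F^*\omega_0=\dd(F^*\alpha)=\omega_s$ automatically. Following Section~\ref{sec4} I would take the ansatz $F=m_{a(r)}\circ\Phi_{b(r)}$, where $r=|v|$, $\Phi_b$ is the flow of $-H$ from Lemma~\ref{lem4}, and $m_a$ is the fibre scaling of Definition~\ref{def}.

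Next I would compute $F^*\alpha=\Phi_b^*(m_a^*\alpha)=\Phi_b^*(a\,\alpha)$, using $m_a^*\alpha=a\,\alpha$ (Lemma~\ref{lem3}) and that $\Phi_b$ preserves $r$ (because $\dd E(H)=0$). Specialising Lemma~\ref{lem4} to $\kappa=-1$, where $\sqrt{\kappa}=i$, $\sqrt{2E}=r$, and $\sin(ix)=i\sinh x$, $\cos(ix)=\cosh x$, gives $\Phi_b^*\alpha=\cosh(rb)\,\alpha+r\sinh(rb)\,\tau$, so that
\begin{equation*}
F^*\alpha=a\cosh(rb)\,\alpha+a\,r\sinh(rb)\,\tau .
\end{equation*}
Matching with $\alpha-s\tau$ yields the two scalar equations $a\cosh(rb)=1$ and $a\,r\sinh(rb)=-s$; dividing gives $r\tanh(rb)=-s$, hence
\begin{equation*}
b(r)=-\frac{1}{r}\,\mathrm{arctanh}\!\left(\frac{s}{r}\right),\qquad a(r)=\frac{1}{\cosh(rb)}=\frac{\sqrt{r^2-s^2}}{r},
\end{equation*}
both smooth and well defined precisely because $s<r<\lambda$. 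The new radial coordinate is $R(r)=a(r)\,r=\sqrt{r^2-s^2}$, which increases diffeomorphically from $0$ to $\sqrt{\lambda^2-s^2}$ as $r$ runs over $(s,\lambda)$, so $F$ maps $D_s^\lambda\Sigma$ onto $D_0^{\sqrt{\lambda^2-s^2}}\Sigma$ exactly as claimed.

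The one point requiring care is the same subtlety as in Theorem~\ref{thm1}: since $a$ and $b$ depend on $r$, the differential $\dd F$ a priori carries extra terms in the $Y$- and $H$-directions (cf.\ \eqref{eq6}). Here these are harmless, because $\alpha(Y)=\alpha(H)=0$: both the flow velocity $-H$ of $\Phi_b$ and the radial direction scaled by $m_a$ are annihilated by $\alpha$, so the displayed formula for $F^*\alpha$ is exact and no correction terms survive. Finally I would observe that $F$ is a diffeomorphism onto its image (it is a composition of the fibrewise flow $\Phi_{b(r)}$, a diffeomorphism of $D_s^\lambda\Sigma$ since it preserves each level $\{r=\mathrm{const}\}$, with the strictly monotone radial rescaling $m_{a(r)}$), and that $F^*\omega_0=\omega_s$ by construction, making $F$ the desired symplectomorphism. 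The main obstacle is really only the bookkeeping of the $r$-dependence; once one notes that $\alpha$ kills the $Y$- and $H$-directions, the argument closes in exactly the same way as for Theorem~\ref{thm1}.
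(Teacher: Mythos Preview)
Your proposal is correct and follows essentially the same approach as the paper: the same ansatz $F=m_{a(r)}\circ\Phi_{b(r)}$, the same pullback computation via Lemmas~\ref{lem4} and~\ref{lem3} specialized to $\kappa=-1$, and the same solution $a(r)=\sqrt{r^2-s^2}/r$, $b(r)=-\tfrac{1}{r}\mathrm{arctanh}(s/r)$. Your treatment of the $r$-dependence (the observation that the extra terms in $\dd F$ lie in the $Y$- and $H$-directions, on which $\alpha$ vanishes) is exactly the point the paper flags by referring back to equation~\eqref{eq6}, and your explicit computation $R(r)=\sqrt{r^2-s^2}$ to identify the image is equivalent to the paper's remark that $(r\,a_s(r))'\neq 0$.
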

\begin{proof}
The argument works analogously to the proof of Proposition 1.5 in \cite{Bd16}. We make the ansatz $F=m_{a_s(r)}\circ \phi_{b_s(r)}$ for some real valued functions $a_s, b_s$ of $r$. The first two terms in equation \eqref{eq6} also annihilate $\alpha$ and therefore we obtain by Lemma \ref{lem4} and \ref{lem3} 
\begin{align*}
    F^*\alpha&=\phi^*_{b_s(r)}m^*_{a_s(r)}\alpha=a_s(r)(\cosh(rb_s(r))\alpha+r\sinh(rb_s(r))\tau)\\
    &\stackrel{!}{=}\alpha-s\tau.
\end{align*}
We read off
\begin{align*}
    a_s(r)\cosh(rb_s(r))&=1,\\
    a_s(r)\sinh(rb_s(r))&=-\frac{s}{r}.
\end{align*}
This is for $r>s$ solved by the smooth functions
\begin{align*}
    a_s(r)&=\frac{\sqrt{r^2-s^2}}{r},\\
    b_s(r)&=\frac{\tanh^{-1}(-\frac{s}{r})}{r}=\frac{1}{2r}\ln\left(\frac{r-s}{r+s}\right)
\end{align*}
We observe that $(ra_s(r))'\neq 0$ for all $r>s$ and therefore $F$ is a diffeomorphism.
%\begin{align*}
%   & a_s: (s,\infty)\to (0, 1)\\
%   & b_s: (s,\infty)\to (-\infty,0) 
%\end{align*}
%are strictly monotone increasing and therefore $F$ is a diffeomorphism.
\end{proof}
\noindent
It follows that for a fixed free homotopy class of loops $\nu$ we have
\begin{align*}
    c^\nu_{\mathrm{HZ}}(D_\lambda\Sigma,D_s\Sigma,\omega_s)&=c^\nu_{\mathrm{HZ}}\left(D_{\sqrt{\lambda^2-s^2}}\Sigma,\Sigma,\omega_0\right)\\
    &=\sqrt{\lambda^2-s^2}\ c_{\mathrm{HZ}}^\nu(D_1\Sigma,\Sigma,\omega_0)=\sqrt{\lambda^2-s^2}\ l_\nu
\end{align*}
by (\cite[Thm. 4.3]{Wbr06}) as already mentioned in the introduction (see (\ref{rell})).
\section{Symmetries of the magnetic sphere}\label{sec6}
In this section we explicitly build the symplectomorphism in Theorem \ref{thm4}.  It is constructed using the $\text{SO}(3)$ symmetry of $D_\lambda S^2$. The group $\text{SO}(3)$ acts on $S^2$ by isometries and thus induces a symplectic $\text{SO}(3)$ action on the disc bundle $(D_\lambda S^2,\omega_s)$. This action admits a moment map:
$$
\mu_{D_\lambda S^2}(x,v)= x\times v -sx,
$$
where we identified the Lie algebra of $\text{SO(3)}$ with $\R^3$ and $\times$ denotes the cross product.\\
The target space of our embedding will be $S^2\times S^2$ with the split symplectic form $R_1\sigma\oplus R_2\sigma$ for suitable constants $R_1,R_2$. The diagonal action of $\text{SO}(3)$ on this manifold is also symplectic and admits a moment map:
$$
\mu_{S^2\times S^2}(N,S)= R_1 N+R_2S.
$$
This action restricts to an action on $S^2\times S^2\setminus \Delta$ as it leaves the diagonal $\Delta\subset S^2\times S^2$ invariant.
\begin{Theorem}
There is a symplectomorphism, which is equivariant with respect to the Hamiltonian $\mathrm{SO}(3)$ actions, 
$$
F: (D_\lambda S^2,\omega_s)\to (S^2\times S^2\setminus \Delta, R_1\sigma\oplus R_2\sigma),
$$
where $\Delta\subset S^2\times S^2$ denotes the diagonal and $R_1,R_2$ are determined by 
$$
s=R_2-R_1,\ \ \lambda=2\sqrt{R_1R_2}.
$$
More precisely the moment maps are related via
\begin{equation}
    \label{eq:1}
\mu_{D_\lambda S^2}=\mu_{S^2\times S^2}\circ F.
\end{equation}
\end{Theorem}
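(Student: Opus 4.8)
The plan is to first pin down the two moment maps, then build an $\mathrm{SO}(3)$-equivariant diffeomorphism $F$ that realizes the relation \eqref{eq:1} by construction, and finally deduce that $F$ is symplectic from an abstract uniqueness property of invariant symplectic forms sharing a moment map. For the moment maps I would verify, using $\iota_{X_\xi}\omega=\dd\langle\mu,\xi\rangle$ for $\xi\in\mathfrak{so}(3)\cong\R^3$, that $\mu_{D_\lambda S^2}(x,v)=x\times v-sx$ is the moment map of the lifted isometric action on $(D_\lambda S^2,\omega_s)$ (the term $x\times v$ is the usual cotangent-lift moment map of $\dd\alpha$, while $-sx$ is the contribution of the magnetic term $-s\pi^*\sigma$, whose $S^2$-moment map is $x\mapsto x$), and that $\mu_{S^2\times S^2}(N,S)=R_1N+R_2S$ is the moment map of the diagonal action on $(S^2\times S^2,R_1\sigma\oplus R_2\sigma)$.

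Next I would construct $F$. Writing $r=|v|$ one computes $|\mu_{D_\lambda S^2}|^2=r^2+s^2$, and the choice $s=R_2-R_1$, $\lambda=2\sqrt{R_1R_2}$ is exactly what makes this compatible with $|R_1N+R_2S|^2=R_1^2+R_2^2+2R_1R_2\,N\cdot S$ for unit $N,S$: it forces $N\cdot S=\tfrac{2r^2}{\lambda^2}-1$, which runs from $-1$ at the zero section to $1$ as $r\to\lambda$, so the image sweeps out $S^2\times S^2$ minus the diagonal. To define $F(x,v)=(N,S)$ I impose \eqref{eq:1}, i.e.\ $R_1N+R_2S=x\times v-sx$, together with $|N|=|S|=1$; for $r>0$ the vectors $x$ and $\widehat{x\times v}:=(x\times v)/r$ form an orthonormal pair, and placing $N,S$ in their span reduces \eqref{eq:1} to the planar ``two-circles'' problem of finding two unit vectors with a prescribed weighted sum. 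This has a solution depending smoothly on $r$, uniquely determined once the branch is fixed by continuity from the value $F(x,0)=(x,-x)$ at $r=0$. Because the coefficients depend only on the invariant $r$ and $\widehat{x\times v}$ transforms equivariantly, the resulting $F$ is automatically $\mathrm{SO}(3)$-equivariant, and one checks it is a diffeomorphism onto $S^2\times S^2\setminus\Delta$ (equivariance reduces this to a bijection of moment-map fibres, each a single stabiliser circle).

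It then remains to show $F^*(R_1\sigma\oplus R_2\sigma)=\omega_s$, and this is where the construction pays off without any direct pullback computation. Both $\omega_s$ and $\omega':=F^*(R_1\sigma\oplus R_2\sigma)$ are $\mathrm{SO}(3)$-invariant symplectic forms on $D_\lambda S^2$, and since $F$ is equivariant and satisfies \eqref{eq:1}, the moment map of $\omega'$ equals $\mu_{S^2\times S^2}\circ F=\mu_{D_\lambda S^2}$, the moment map of $\omega_s$ (there is no additive ambiguity since $\mathfrak{so}(3)$ is semisimple). Hence $\beta:=\omega_s-\omega'$ satisfies $\iota_{X_\xi}\beta=0$ for every generator $X_\xi$. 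On the complement of the zero section the orbits are three-dimensional, so at each such point the orbit tangent space, of dimension $3$, lies in the kernel of $\beta$; the rank of $\beta$ there is therefore at most $1$, and being skew-symmetric it has even rank and so vanishes. By density of the principal orbits and continuity $\beta\equiv0$, whence $\omega'=\omega_s$ and $F$ is the desired equivariant symplectomorphism.

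I expect the main obstacle to be the concrete work in the second step: producing a globally defined, smooth equivariant $F$ satisfying the moment-map relation, in particular checking smoothness across the zero section (where the orbit type drops) and confirming that $F$ is a genuine diffeomorphism onto $S^2\times S^2\setminus\Delta$ rather than merely a local one. By contrast the symplectic identity -- normally the hard part of such statements -- becomes essentially automatic once the equivariant model is in place, thanks to the rank argument above.
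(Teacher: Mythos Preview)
Your proposal is correct and follows essentially the same route as the paper. The paper also defines $F$ by forcing the moment-map relation \eqref{eq:1}, concretely parametrising the two unit vectors in the plane $\mathrm{span}\{x,x\times v\}$ via the geodesic flow (writing $N=\gamma_{(x,-\iota_x v)}(c_1)$ and $S=\gamma_{(-x,-\iota_{-x}\dd I_xv)}(c_2)$ and solving for the angle functions $c_1,c_2$ by the implicit function theorem, including at $r=0$), and then proves $F^*(R_1\sigma\oplus R_2\sigma)=\omega_s$ by exactly your rank argument: equality of the two moment maps forces $\iota_{\rho(\xi)}(\omega_s-F^*\tilde\omega)=0$ for all $\xi$, and the $3$-dimensionality of orbits off the zero section kills the difference. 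Your anticipated obstacle---smoothness of $F$ across the zero section---is precisely the point the paper works out carefully, so your assessment of where the labour lies is accurate.
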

\noindent
Observe that 
$$
m_{R_1^{-1}}: \  \left(D_{2\sqrt{R_1R_2}}S^2,\dd\alpha-(R_2-R_1)\pi^*\sigma\right)\to\left(D_{2\sqrt{R_2/R_1}}S^2,R_1\left(\dd\alpha-\left(\frac{R_2}{R_1}-1\right)\pi^*\sigma\right)\right)
$$
is an equivariant symplectomorphism. Therefore, we can restrict to the case $R_1=1$ and $R_2=R$.\\
For the construction of $F$ we will need the antipodal map
$$ I:\ S^2\to S^2;\qquad x\mapsto-x$$
and the rotation in the fibers by $90^\circ$ (i.e. complex structure) we defined in Section \ref{sec2}:
\begin{equation}\label{def:2}
\iota: TS^2\to TS^2;\qquad (x,v)\mapsto (x,\iota_xv).
\end{equation}
Let us now denote by $\Psi_c:\ D_\lambda S^2\to D_\lambda S^2$ the geodesic flow for time $c$, i.e. 
$$
\Psi_c(x,v)=(\gamma_{(x,v)}(c),\dot{\gamma}_{(x,v)}(c))
$$
with $\gamma_{(x,v)}(c):=\exp_x(c\vert v\vert)$. If $c:(-\lambda,\lambda)\to(0,\infty)$ is a smooth even function, we can define the smooth map
$$
\varphi_c:D_\lambda S^2\to S^2;\ (x,v)\mapsto\gamma_{(x,v)}(c(\vert v\vert)).
$$
wich is the geodesic flow for time $c(\vert v\vert)$ projected to $\Sigma$. 
Our claim is now that (for suitable smooth even functions $c_1, c_2$)
$$ F: (D_{2\sqrt{R}} S^2,\omega_{R-1})\to (S^2\times S^2\setminus\Delta,\sigma\oplus R\sigma);\ \ \ (x,v)\mapsto (\varphi_{c_1}(x,-\iota_xv),\varphi_{c_2}(I(x),-\iota_{I(x)}\dd I_x v))
$$
is a symplectomorphism. We will spend the rest of this section proving this claim. 
From the definition of $F$ we see that
$$
F(x,v)=(\gamma_{(x,-\iota_x v)}(c_1),\gamma_{(I(x),-\iota_{I(x)}\dd I_x  v)}(c_2)),
$$
\noindent
and we will determine $c_1,\ c_2$ imposing the relation of the moment maps given by equation \eqref{eq:1}. Choosing the coordinate system of $\R^3$ such that the first axis is parallel to $x\times v$ and the second axis is parallel to $x$ and setting $r=\vert v\vert$, we can rewrite the relation \eqref{eq:1} as follows

   \begin{equation}\label{eq:22}
   \begin{aligned}
   &N_1+RS_1=r  &\Leftrightarrow &\ \ \ \sin(c_1r)+R\sin(c_2r)=r, \\
   &N_2+RS_2=1-R &\Leftrightarrow &\ \ \ \cos(c_1r)-R\cos(c_2r)=1-R,
   \end{aligned}
   \end{equation}
\noindent
where $F(x,v)=(N,S)$ and $N_i, S_i$ denote the $i$th component of $N,S$. One can geometrically determine the functions $c_1, c_2$. This is shown in Figure \ref{fig:8} at the end of the paper.
\begin{Lemma}
The functions 
$$
c_1, c_2:\ (-\lambda,\lambda)\to (0,\infty)
$$
implicitly defined via the equation \eqref{eq:22} are smooth and even.
\end{Lemma}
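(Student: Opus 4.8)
The plan is to view \eqref{eq:22} as defining $(c_1,c_2)$ implicitly as a function of $r$ and to apply the implicit function theorem, the only difficulty being that at $r=0$ both equations collapse to the trivial identity $0=0$ and carry no information. To remove this degeneracy I would first rewrite the system in a form that stays smooth and nondegenerate across $r=0$. Setting $\operatorname{sinc}(t):=\sin(t)/t$ and $k(t):=2(1-\cos t)/t^{2}$ — both of which extend to even real-analytic functions with $\operatorname{sinc}(0)=k(0)=1$ — the first equation of \eqref{eq:22} divided by $r$, and the second equation rewritten as $1-\cos(c_1 r)=R(1-\cos(c_2 r))$ and divided by $r^{2}/2$, become
\begin{align*}
c_1\operatorname{sinc}(c_1 r)+R\,c_2\operatorname{sinc}(c_2 r)&=1,\\
c_1^{2}\,k(c_1 r)-R\,c_2^{2}\,k(c_2 r)&=0.
\end{align*}
For $r\neq 0$ this is equivalent to \eqref{eq:22}, but now the left-hand sides assemble into a map $\Phi(c_1,c_2,r)$ that is smooth on all of $\R^{2}\times(-\lambda,\lambda)$ and, crucially, \emph{even} in $r$.

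At $r=0$ the system reads $c_1+Rc_2=1$ and $c_1^{2}=Rc_2^{2}$, whose unique solution in $(0,\infty)^{2}$ is $c_1=(1+\sqrt R)^{-1}$, $c_2=(\sqrt R+R)^{-1}$. I would then compute the Jacobian of $\Phi$ in $(c_1,c_2)$ at this point and at $r=0$, obtaining
$$
\det\begin{pmatrix}1 & R\\ 2c_1 & -2Rc_2\end{pmatrix}=-2R(c_1+c_2)\neq 0 .
$$
The implicit function theorem then yields unique smooth functions $c_1(r),c_2(r)$ solving $\Phi=0$ near $r=0$ with these initial values; this is precisely the step that forces smoothness through the degenerate point, and it is the heart of the argument. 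Away from the origin I would apply the implicit function theorem directly to \eqref{eq:22}, where the relevant Jacobian determinant is $R\,r^{2}\sin\big((c_1+c_2)r\big)$, nonzero as long as $(c_1+c_2)r\in(0,\pi)$ — the range in which the geometric construction of Figure~\ref{fig:8} locates the solution. The two families of local solutions agree on their overlap by uniqueness, so gluing them produces a single smooth solution on all of $(-\lambda,\lambda)$, while global existence and uniqueness of $(c_1,c_2)$ within the admissible range is supplied by the intersection picture of Figure~\ref{fig:8}.

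Evenness follows from the same uniqueness together with the symmetry built into $\Phi$: since $\operatorname{sinc}$ and $k$ are even we have $\Phi(c_1,c_2,-r)=\Phi(c_1,c_2,r)$, so both $r\mapsto(c_1(r),c_2(r))$ and $r\mapsto(c_1(-r),c_2(-r))$ solve the same system, and uniqueness forces $c_i(-r)=c_i(r)$. The main obstacle is therefore entirely concentrated at $r=0$: the naive implicit function theorem fails there because the linearization of \eqref{eq:22} degenerates, and the whole point of the $\operatorname{sinc}$/$k$ factorization is to divide out the vanishing powers of $r$ so that a genuinely nondegenerate problem remains.
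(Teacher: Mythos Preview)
Your proof is correct and follows essentially the same line as the paper's: both rewrite \eqref{eq:22} using the analytic extensions $\sin(t)/t$ and $(1-\cos t)/t^{2}$ to obtain a smooth system that is nondegenerate at $r=0$, solve for $c_1(0),c_2(0)$, and check the Jacobian there, while handling $r\neq 0$ by a direct application of the implicit function theorem and deducing evenness from the $r\mapsto -r$ symmetry. The only cosmetic differences are your normalization $k(0)=1$ (versus the paper's $\sigma(0)=1/2$, which accounts for the extra factor of $2$ in your Jacobian) and the fact that you make the $r\neq 0$ Jacobian $Rr^{2}\sin((c_1+c_2)r)$ explicit, whereas the paper simply asserts that the implicit function theorem applies there.
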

\begin{proof}
Applying the implicit function theorem directly to the equations \eqref{eq:22} yields smoothness of $c_1, c_2$ whenever $r\neq 0$. Further, they are even, as the defining equations \eqref{eq:22} are invariant under $r\to -r$. For the case $r\to 0$ we rewrite \eqref{eq:22} in terms of the function
$$
G: (-\lambda,\lambda)\times(0,\infty)^2\to \R^2;\qquad (r, c_1,c_2)\to \begin{pmatrix}
\tau(c_1r)c_1+R\tau(c_2r)c_2-1\\
\sigma(c_1r)c_1^2-R\sigma(c_2r)c_2^2
\end{pmatrix},
$$
where $\tau,\sigma: \R\to\R$ are the smooth functions given by
$$
\tau(x)=\frac{\sin(x)}{x},\qquad\sigma(x)=\frac{1-\cos(x)}{x^2}.
$$
The equations \eqref{eq:22} are now equivalent to $G(r,c_1,c_2)=0$, taking the derivative in $c_1, c_2$ yields
$$
\dd_{c_1,c_2}G=\begin{pmatrix}
r\tau'(c_1r)c_1+\tau(c_1r) & Rr\tau'(c_2r)c_2+R\tau(c_2r)\\
r\sigma'(c_1r)c_1^2+2\sigma(c_1r)c_1 & -Rr\sigma'(c_2r)c_2^2-2R\sigma(c_2r)c_2.
\end{pmatrix}
$$
Observe that taking the limit $r\to 0$ in \eqref{eq:22} yields
$$
    c_1(0)+Rc_2(0)=1,\qquad
    c_1(0)^2-Rc_2(0)^2=0
$$
thus
$$
c_1(0)=(1+\sqrt R)^{-1},\qquad
c_2(0)=(\sqrt R+ R)^{-1}.
$$
Further, 
$\tau(0)=1=2\sigma(0),\ \tau'(0)=0=\sigma'(0)$ and therefore
$$
\dd_{c_1,c_2}G\vert_{r=0}=\begin{pmatrix}
1 & R\\
c_1(0) & -Rc_2(0)
\end{pmatrix}.
$$
We see that 
$$
\det\left(\dd G_{(c_1,c_2)}\vert_{r=0}\right)=-R(c_1(0)+c_2(0))\neq 0
$$
and it follows by the implicit function theorem that $c_1, c_2$ are smooth at $r=0$.
\end{proof}
\noindent
That $F$ is indeed bijective is shown in figure \ref{fig:10} where an inverse to $F$ is constructed geometrically. We have shown that $F$ is smooth, bijective and $\dd F_q$ is invertible for all $q\in D_\lambda S^2$. Thus, $F$ is a diffeomorphism. It remains to be shown that $F$ is symplectic.
We will use the group action to show that $F$ is a symplectomorphism.
\begin{Lemma}\label{lem5}
For all $(x,v)\in D_\lambda S^2$ and all $g\in\mathrm{SO}(3)$ we have
$$
g\cdot F(x,v)=F(g\cdot (x,v)),
$$
where $g\cdot$ denotes the group action on $D_\lambda S^2$ or $S^2\times S^2\setminus\Delta$.
\end{Lemma}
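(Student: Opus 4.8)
The plan is to prove Lemma \ref{lem5} by exploiting the naturality of the geometric constructions under isometries of $S^2$. The essential observation is that $F$ is built entirely from objects that are $\text{SO}(3)$-equivariant in a manifest way: the antipodal map $I$, the fiberwise complex structure $\iota$, the differential of the action, and the geodesic flow $\Psi_c$ (and hence $\varphi_c$). Since $\text{SO}(3)$ acts on $S^2$ by isometries, each of these commutes with the induced action on $TS^2$, and therefore so does their composition $F$.

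\medskip
\noindent
First I would record the equivariance of each building block. For $g\in\text{SO}(3)$ the induced action on $TS^2$ is $g\cdot(x,v)=(gx,\dd g_x v)=(gx,gv)$, where in the last step we use that $g$ is a linear isometry of $\R^3$ restricting to $S^2$. The antipodal map satisfies $g\circ I=I\circ g$ trivially since both are linear maps on $\R^3$. The complex structure $\iota$ is defined via the metric and area form, both of which are $\text{SO}(3)$-invariant, so $\iota_{gx}(gv)=g(\iota_x v)$; equivalently $\iota\circ(g\cdot)=(g\cdot)\circ\iota$ on $TS^2$. Finally, because geodesics are mapped to geodesics by isometries, the geodesic flow is equivariant, $\Psi_c(g\cdot(x,v))=g\cdot\Psi_c(x,v)$, and since $c$ depends only on $\vert v\vert=\vert gv\vert$, the projected map obeys $\varphi_c(g\cdot(x,v))=g\,\varphi_c(x,v)$.

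\medskip
\noindent
Next I would assemble these into the claimed identity. Writing $F(x,v)=(\varphi_{c_1}(x,-\iota_xv),\varphi_{c_2}(I(x),-\iota_{I(x)}\dd I_x v))$, I apply the equivariance of each factor. For the first component, $\varphi_{c_1}(g\cdot(x,-\iota_xv))=g\,\varphi_{c_1}(x,-\iota_xv)$, and equivariance of $\iota$ gives $g\cdot(x,-\iota_xv)=(gx,-\iota_{gx}(gv))$, which is exactly the first-component input evaluated at $g\cdot(x,v)$. The second component is handled identically, using in addition that $I$ commutes with the action. Since the diagonal action on $S^2\times S^2$ sends $(N,S)$ to $(gN,gS)$, combining the two components yields $F(g\cdot(x,v))=(g\,\varphi_{c_1}(\ldots),g\,\varphi_{c_2}(\ldots))=g\cdot F(x,v)$, as desired.

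\medskip
\noindent
I expect no serious obstacle here; the lemma is essentially a bookkeeping exercise in naturality, and the only point requiring care is confirming that $c_1,c_2$ depend on $(x,v)$ solely through $\vert v\vert$, so that they are unchanged under the action and may be pulled outside the equivariance. This is already built into the definition of $\varphi_c$ via a function $c$ of $\vert v\vert$ alone. The real payoff comes afterward: once $F$ is known to be equivariant and the moment maps satisfy $\mu_{D_\lambda S^2}=\mu_{S^2\times S^2}\circ F$, one can argue that $F$ is symplectic by checking the condition on a transversal to the group orbits (reducing an infinite-dimensional verification to a low-dimensional one), which is presumably the strategy of the remaining argument.
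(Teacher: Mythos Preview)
Your proposal is correct and follows essentially the same approach as the paper: both arguments record that the antipodal map $I$, the fiberwise complex structure $\iota$, and the geodesic flow each commute with the $\mathrm{SO}(3)$-action (by linearity, invariance of the metric and area form, and isometries preserving geodesics, respectively), and then conclude equivariance of $F$ from the equivariance of its constituents. Your write-up is in fact more detailed than the paper's, explicitly noting that $c_1,c_2$ depend only on $|v|$ and correctly anticipating that the lemma is subsequently used to deduce symplecticity of $F$ via the moment-map relation.
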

\begin{proof}
For any group element $g\in \text{SO}(3)$ write $\psi_g:S^2\to S^2$ for the group action. We observe that by linearity
$$
I\circ\psi_g=\psi_g\circ I.
$$
As $\text{SO}(3)$ acts by orientation preserving isometries 
$$
 \iota\circ\dd\psi_g=\dd\psi_g\circ\iota
$$
and further if $\gamma_{(x,v)}$ is the geodesic starting at $x\in S^2$ in direction $v\in T_x S^2$ then
$$
\psi_g(\gamma_{(x,v)}(t))=\gamma_{(\psi_g(x),\dd(\psi_g)_xv}(t).
$$
Taking all three relations together the claim follows.
\end{proof}
\noindent
Given any generator $\xi\in\mathfrak{so}(3)$ we denote by
\begin{align*}
  \rho(\xi)^{D_\lambda S^2}_{(x,v)}&=\diff \exp(t\xi)\cdot (x,v), \\
  \rho(\xi)^{S^2\times S^2}_{(N,S)}&=\diff \exp(t\xi)\cdot (N,S)
\end{align*}
the vector field on $D_\lambda S^2$ respectively $S^2\times S^2\setminus\Delta$ generated by $\xi$, where $\exp:\mathfrak{so}(3)\to \text{SO}(3)$ denotes the exponential map. By Lemma \ref{lem5} these vector fields are related via
$$
\dd F_{(x,v)} \rho(\xi)^{D_\lambda S^2}_{(x,v)}=\diff F(\exp(t\xi)\cdot (x,v))=\diff \exp(t\xi)\cdot F(x,v)=\rho(\xi)^{S^2\times S^2}_{F(x,v)}.
$$
We denote $\Tilde{\omega}=\sigma\oplus R\sigma$. By construction $F$ satisfies
$\mu_{D_\lambda S^2}\circ F=\mu_{S^2\times S^2}$ thus 
\begin{align*}
    \omega_s(\rho^{D_\lambda S^2}(\xi),\cdot)&=-\dd(\langle\mu_{D_\lambda S^2},\xi\rangle)=-\dd(\langle\mu_{S^2\times S^2}\circ F,\xi\rangle)=-\dd(\langle\mu_{S^2\times S^2},\xi\rangle)\circ \dd F\\
    &=\Tilde{\omega}(\rho^{S^2\times S^2}(\xi),\dd F\ \cdot)=\Tilde \omega(\dd F\rho^{D_\lambda S^2}(\xi),\dd F\ \cdot)=F^*\tilde\omega(\rho^{D_\lambda S^2}(\xi),\cdot).
\end{align*}
As symplectic forms are skew symmetric and outside the zero-section the vector fields $\rho^{D_\lambda S^2}(\xi)$ generated by $\mathfrak{so}(3)$ span a three dimensional subspace of $TD_\lambda S^2$ this already proves
$$
F^ *(\sigma\oplus R\sigma)=\omega_s.
$$
\begin{figure}[h]
	\centering
  \includegraphics[width=0.95\textwidth]{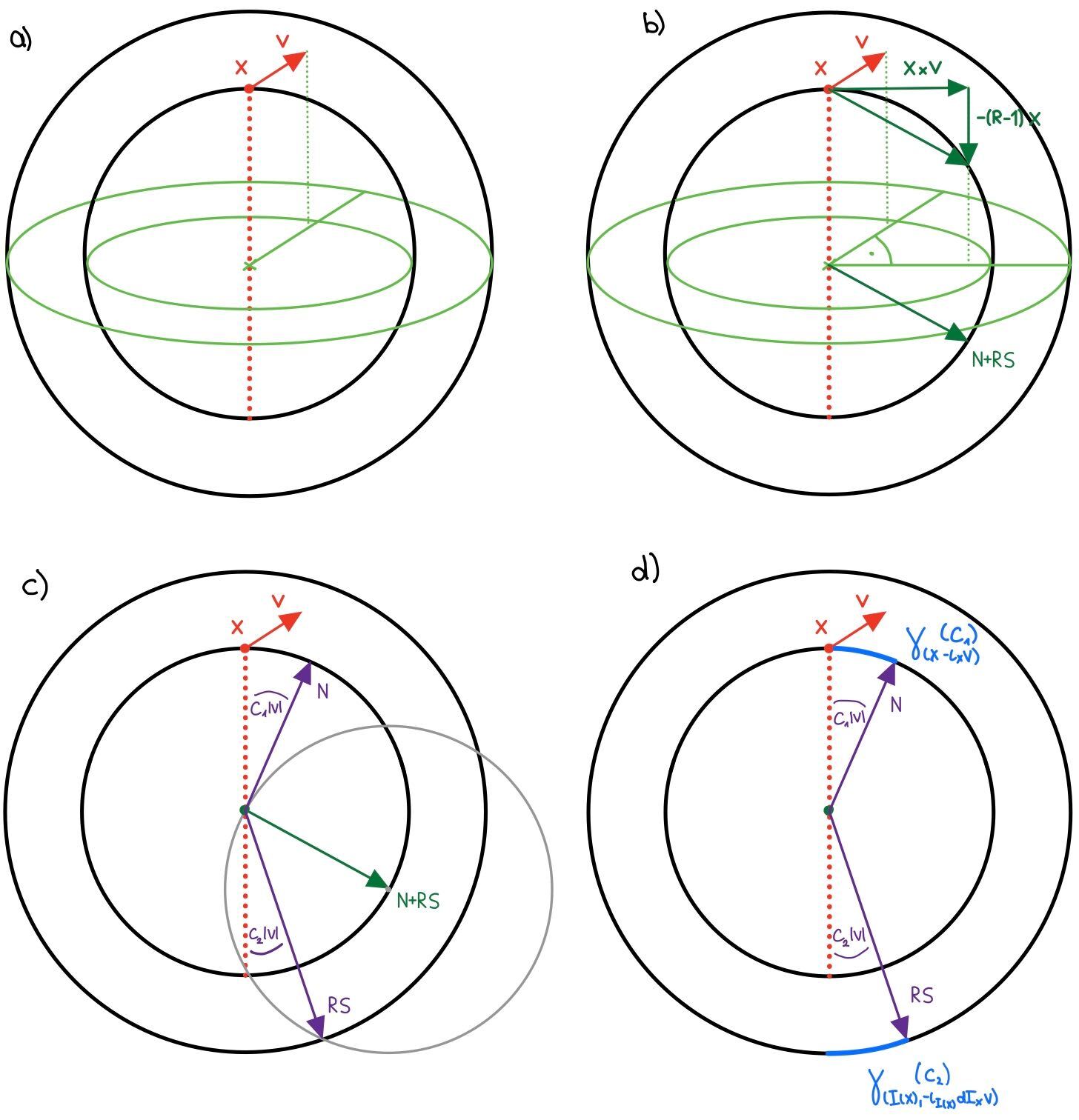}
	\caption{\textit{a) We take any $(x,v)\in TS^2$ and b) calculate the moment map
	$ \mu(x,v)=x\times v-(R-1)x. $
	We know that $\mu(x,v)\stackrel{!}{=}N+RS$. c) From now on we can reduce the picture to the two-dimensional intersection of the sphere with the plane spanned by $x$ and $x\times v$. To determine $RS$ draw a circle of radius $1$ around $N+RS$. It intersects the outer black circle (it has radius $R$) twice and the intersection that is further away from $x$ is $RS$. Then $N$ is determined by $N=N+RS-RS$. Observe that there is an intersection of the grey circle with the outer black circle only if $\vert v\vert\leq 2\sqrt{R}$ thus $\lambda=2\sqrt{R}$. d) We have geometrically determined the angles $c_1\vert v\vert$ and $c_2\vert v\vert$.}}
	\label{fig:8}
\end{figure}
\begin{figure}
	\centering
  \includegraphics[width=1\textwidth]{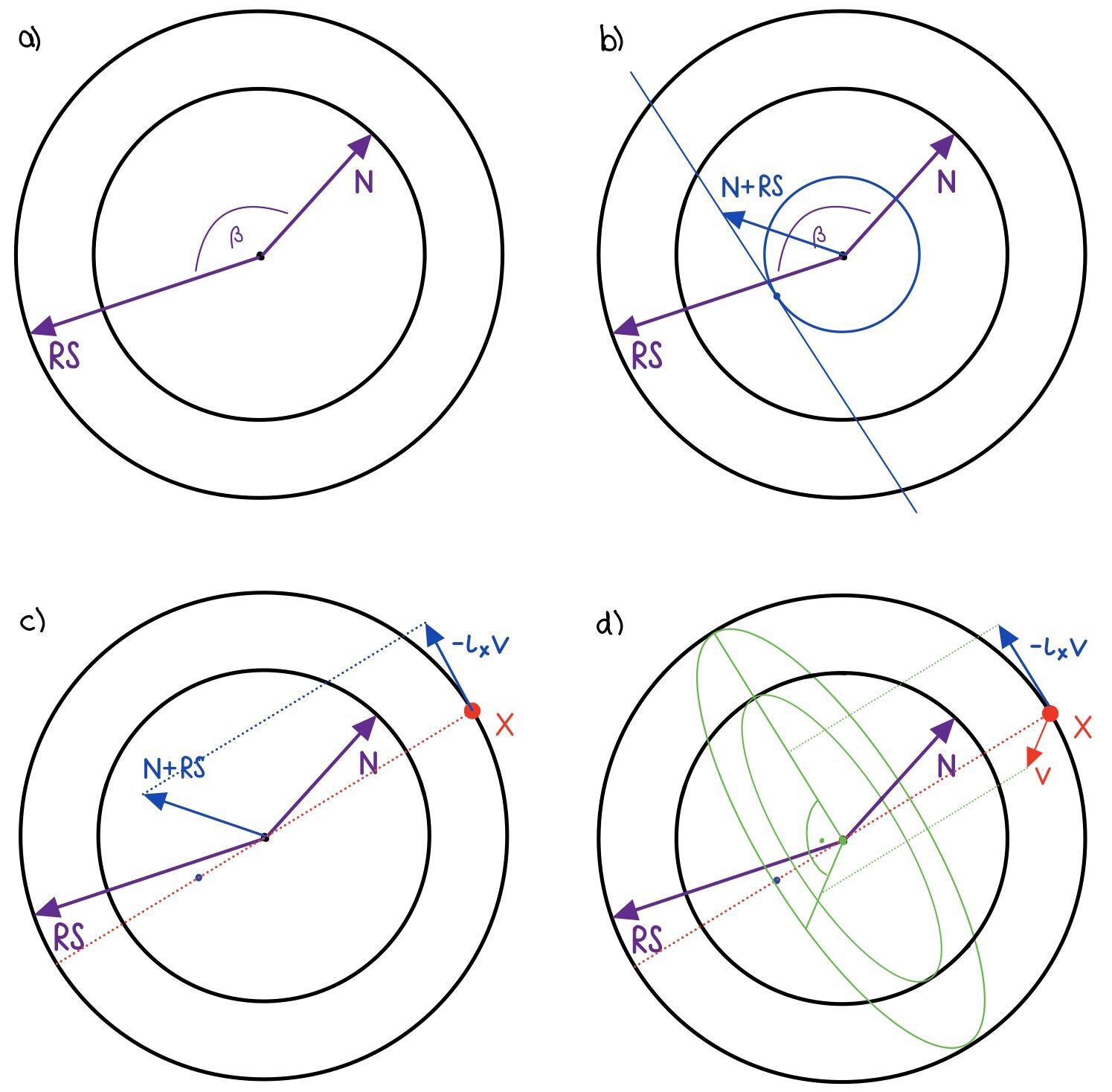}
	\caption{\textit{a) If $S\neq -N$, then $(N,S)\in S^2\times S^2\setminus\Delta$ determine a unique plane they both lie in. b) We can then draw a straight line from $N+RS$ tangential to the circle with radius $R-1$ (we choose the intersection closer to $S$). c) Then $x$ is determined as the intersection of the red line through the tangency and the origin, with the larger black circle. We can then determine $-\iota_x v=x\times v=N+RS+(R-1)x$ and d) deduce $v$. In this case $v$ points out of the plane. This map extends smoothly to $(N,-N)\in S^2\times S^2\setminus \Delta$ and maps it to $(N,0)\in D_\lambda S^2$.}}
	\label{fig:10}
\end{figure}
\bibliography{ref}{}

\begin{thebibliography}{10}

\bibitem{Bd14}
Gabriele Benedetti.
\newblock The contact property for magnetic flows on surfaces.
\newblock {\em PhD Thesis, University of Cambridge}, (see
  https://doi.org/10.17863/CAM.16235, or arXiv:1805.04916), 2014.

\bibitem{Bd16}
Gabriele Benedetti.
\newblock Magnetic {K}atok examples on the two-sphere.
\newblock {\em Bulletin of the London Mathematical Society}, 48(5):855--865,
  2016.

\bibitem{BR19}
Gabriele Benedetti and Alexander~F. Ritter.
\newblock Invariance of symplectic cohomology and twisted cotangent bundles
  over surfaces.
\newblock {\em Internat. J. Math.}, 31(9):2050070, 56, 2020.

\bibitem{BZ15}
Gabriele Benedetti and Kai Zehmisch.
\newblock On the existence of periodic orbits for magnetic systems on the
  two--sphere.
\newblock {\em Journal of Modern Dynamics}, 9(01):141--146, 2015.

\bibitem{FHV89}
A.~Floer, H.~Hofer, and C.~Viterbo.
\newblock The {W}einstein conjecture in {$P\times {\bf C}^l$}.
\newblock {\em Math. Z.}, 203(3):469--482, 1990.

\bibitem{Gb96}
Victor~L. Ginzburg.
\newblock On closed trajectories of a charge in a magnetic field. {A}n
  application of symplectic geometry.
\newblock {\em Contact and Symplectic Geometry}, pages 131--148, 1996.

\bibitem{Gb04}
Viktor~L. Ginzburg.
\newblock The {W}einstein conjecture and theorems of nearby and almost
  existence.
\newblock In {\em The breadth of symplectic and {P}oisson geometry}, volume 232
  of {\em Progr. Math.}, pages 139--172. Birkh{\"a}user Boston, Boston, MA,
  2005.

\bibitem{VG2003}
Viktor~L Ginzburg and Ba{\c s}ak~Z G{\"u}rel.
\newblock Relative {H}ofer-{Z}ehnder capacity and periodic orbits in twisted
  cotangent bundles.
\newblock {\em Duke Mathematical Journal}, 123(1):1--47, 2004.

\bibitem{Gr85}
M.~L. Gromov.
\newblock Pseudo holomorphic curves in symplectic manifolds.
\newblock {\em Inventiones Mathematicae}, 82:307--347, 1985.

\bibitem{HV92}
H.~Hofer and C.~Viterbo.
\newblock The {W}einstein conjecture in the presence of holomorphic spheres.
\newblock {\em Communications on Pure and Applied Mathematics}, 45(5):583--622,
  1992.

\bibitem{HZ94}
Helmut Hofer and Eduard Zehnder.
\newblock {\em Symplectic invariants and Hamiltonian dynamics}.
\newblock Birkh{\"a}user, 2011.

\bibitem{ler}
Eugene Lerman.
\newblock Symplectic cuts.
\newblock {\em Math. Res. Lett.}, 2(3):247--258, 1995.

\bibitem{LT00}
GANG Liu and GANG Tian.
\newblock Weinstein conjecture and {GW}-invariants.
\newblock {\em Communications in Contemporary Mathematics}, 2(04):405--459,
  2000.

\bibitem{Lu06}
Guangcun Lu.
\newblock Gromov-{W}itten invariants and pseudo symplectic capacities.
\newblock {\em Israel Journal of Mathematics}, 156(1):1--63, 2006.

\bibitem{Mac03}
Leonardo Macarini.
\newblock Hofer-{Z}ehnder capacity and {H}amiltonian circle actions.
\newblock {\em Communications in Contemporary Mathematics}, 06(06), 2002.

\bibitem{Wbr06}
Joa Weber.
\newblock Noncontractible periodic orbits in cotangent bundles and {F}loer
  homology.
\newblock {\em Duke Mathematical Journal}, 133(3):527--566, 2006.

\bibitem{Wdl18}
Chris Wendl.
\newblock {\em Holomorphic curves in low dimensions}.
\newblock Springer, 2018.

\end{thebibliography}
\bibliographystyle{plain}

\end{document}